\newcommand{\Var}{\mathop{\rm Var}\nolimits}
\newcommand{\sign}{\mathop{\rm sign}\nolimits}
\newcommand{\seq}[2]{#1_1,\dotsc,#1_#2}
\newcommand{\ind}{\mathbf{1}}
\newcommand{\D}{\mathbf{D}\,}
\renewcommand{\Re}{\mathop{\rm\bf Re}\nolimits}
\renewcommand{\P}{\mathbf{P}\,}
\newcommand{\E}{\mathbf{E}\,}
\newcommand{\Rd}{\mathbb{R}^{d}}
\newcommand{\R}{\mathbb{R}}
\newcommand{\Sd}{{\mathbb S}^{d-1}}
\newtheorem{theorem}{Theorem}
\newtheorem{lemma}{Lemma}
\newtheorem*{corollary}{Corollary}
\newtheorem*{remark}{Remark}
\newtheorem{example}{Example}
\begin{document}

\title{On random surface area}
\author{Ildar Ibragimov, Dmitry Zaporozhets}
\begin{abstract}
Consider a random smooth Gaussian field $G(x):F\to\mathbb{R}$, where $F$ is a compact in $\mathbb{R}^d$. We derive a formula for average area of a surface generated by the equation $G(x)=0$ and give some applications. As an auxiliary result we obtain an integral expression for area of a surface induced by zeros of a \emph{non-random} smooth field.

\emph{Keywords:} random Gaussian field, surface area, Favard measure, coarea formula, Rice formula.
\end{abstract}

\thanks{Partially supported by RFBR (08-01-00692, 10-01-00242), RFBR-DFG (09-0191331), NSh-4472.2010.1, CRC 701 ``Spectral Structures and Topological Methods in Mathematics''} 
\maketitle

\setlength{\hoffset}{0.5mm}

\section{Results}
Consider a compact set $F\subset\Rd$. By $\partial F$ denote the boundary of $F$. We assume that the area of $\partial F$ is finite (the notion of area is defined below). Let $G(x):F\to\R$ be a random Gaussian field. Put $m(x)=\E G(x)$ and $\sigma^2(x)=\Var G(x)$. Here and below we assume that $\sigma(x)>0$ for all $x\in F$ and $G\in\mathcal{C}^1(F)$ a.s. It is known that the supremum of a continues Gaussian field defined on a compact is summable  (see~\cite{LS70}). Therefore, by Kolmogorov's Theorem on differentiation of mathematical expectations with respect to a parameter (see \cite{aK98}), we have $m,\sigma\in\mathcal{C}^1(F)$. Let $G_i',\sigma_i'$ denote partial derivatives of $G,\sigma$ with respect to $i$th variable. By $\nabla$ denote a gradient of a function (a vector field whose components are partial derivatives).

Consider a zero set of the field $G$
$$
G^{-1}(0)=\{x\in F\,|\,G(x)=0\}\;.
$$
With probability one $G^{-1}(0)$ is a compact  smooth $(d-1)$-dimensional submanifold in $\Rd$, i.e., a compact smooth surface.

The problem we are interested in is a calculation of average area of the surface $G^{-1}(0)$. Substituting $G/\sigma$ for $G$ does not change $G^{-1}(0)$. Therefore we may assume that $\sigma\equiv1$. We prove that
\begin{equation}\label{1805}
\E\lambda_{d-1}[G^{-1}(0)]=\frac{1}{\sqrt{2\pi}}\int_F\,e^{-m^2(x)/2}\E\big\|\nabla G(x)\big\|\,dx.
\end{equation}
For this purpose we derive an auxiliary formula for area of a surface generated by zeros of a non-random smooth field $g(x):F\to\R$:
\begin{equation}\label{1806}
\lambda_{d-1}[g^{-1}(0)]=\frac{1}{2\pi}\int_{-\infty}^{\infty}\,du\int_F\,\cos[ug(x)]\,\big\|\nabla g(x)\big\|\,dx.
\end{equation}
Before we proceed with the exact results formulation, we need to define the notion of area. There exist several well-known definitions of area of a $(d-1)$-dimensional submanifold in $\Rd$: a surface Lebesgue measure, a Hausdorff measure, a Favard measure. In general they are not equivalent. However in case of compact $\mathcal{C}^1$-smooth manifolds all three definitions coincide. Therefore we may choose any one. To prove \eqref{1806} the best choice for $\lambda_{d-1}$ is a Favard measure (for exact definition see Sect.~\ref{1037}). If $d=1$, then by $\lambda_0(A)$ we denote the cardinality of a set $A$ (may be infinite).

Recall that $F$ is supposed to be compact and $\lambda_{d-1}[\partial F]<\infty$.
\begin{theorem}\label{t1}
Suppose  $g\in\mathcal{C}^1(F)$ and
\begin{itemize}
\item[(a)] $\lambda_{d-1}[(\nabla g)^{-1}(0)]<\infty$;
\item[(b)] $\lambda_{d-1}[g^{-1}(0)\cap\partial F]=0$.
\end{itemize}
Then \eqref{1806} holds.
\end{theorem}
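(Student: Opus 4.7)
The right-hand side of~\eqref{1806} has the shape of a Fourier inversion identity, so the plan is to introduce the level-set area function $N(t):=\lambda_{d-1}[g^{-1}(t)]$ and rewrite the inner integral as (the real part of) the Fourier transform of $N$, then recover $N(0)=\lambda_{d-1}[g^{-1}(0)]$ by inversion.

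First, I would apply the coarea formula for $\mathcal{C}^1$ maps $F\to\R$ with the test function $\varphi_u(t)=\cos(ut)$ to obtain, for every $u\in\R$,
\begin{equation*}
I(u) := \int_F \cos(ug(x))\,\|\nabla g(x)\|\,dx = \int_{-\infty}^{\infty} \cos(ut)\,N(t)\,dt.
\end{equation*}
The choice $\varphi\equiv1$ gives $\|N\|_{L^1(\R)}=\int_F\|\nabla g(x)\|\,dx<\infty$, so $N\in L^1(\R)$ and $I$ is, up to a factor, the real part of the Fourier transform $\hat N$.

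Since $I$ is not in general integrable, I would regularise the outer integral in~\eqref{1806} by inserting a Gaussian factor $e^{-\varepsilon u^2/2}$ with $\varepsilon>0$. By Fubini on the now absolutely integrable double integral, and a second application of the coarea formula with $\varphi=\phi_\varepsilon$ the Gaussian density of variance $\varepsilon$,
\begin{equation*}
\frac{1}{2\pi}\int_{-\infty}^{\infty} e^{-\varepsilon u^2/2}\,I(u)\,du
 = \int_F \phi_\varepsilon(g(x))\,\|\nabla g(x)\|\,dx
 = \int_{-\infty}^{\infty}\phi_\varepsilon(t)\,N(t)\,dt
 = (N*\phi_\varepsilon)(0).
\end{equation*}
As $\varepsilon\to 0^+$ the right-most expression tends to $N(0)$ provided $N$ is continuous at the origin, while the left-hand side should tend to $\frac{1}{2\pi}\int_\R I(u)\,du$ in the improper/Gaussian-summability sense in which the iterated integral in~\eqref{1806} is to be read.

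The main obstacle is thus the continuity of $N$ at $t=0$, and this is precisely where hypotheses~(a) and~(b) enter. Hypothesis~(b) guarantees that $g^{-1}(t)\cap\partial F$ contributes nothing to $N(0)$, so the boundary effects can be discarded. Away from the critical set $(\nabla g)^{-1}(0)$ the implicit function theorem shows that $g^{-1}(t)$ is a smooth $(d-1)$-manifold depending continuously, in Favard area, on $t$. Hypothesis~(a), together with a Sard-type argument, then bounds the contribution of the critical set and forces $N(t)\to N(0)$ as $t\to 0$. Once continuity of $N$ at the origin is established, the Gaussian summability step above yields~\eqref{1806}.
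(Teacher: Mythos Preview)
Your approach is quite different from the paper's and contains a genuine gap. The paper never invokes the coarea formula; instead it uses the Favard definition~\eqref{2148} to slice $g^{-1}(0)$ by lines, applies Kac's one-dimensional counting formula (Lemma~\ref{l2}) on each line, and then uses the uniform bound of Lemma~\ref{l5} to justify dominated convergence when passing $\lim_{R\to\infty}\int_{-R}^R$ inside the integrals over $s$ and $y$. In particular, the coarea formula is \emph{derived} later (Example~1) as a consequence of Theorem~\ref{t2}, which in turn rests on Theorem~\ref{t1}; using coarea here would be circular within the paper's development.

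More seriously, your key claim that hypotheses~(a) and~(b) force $N(t)=\lambda_{d-1}[g^{-1}(t)]$ to be continuous at $0$ is false. Take $d=2$, $F=[-1,1]^2$, $g(x,y)=x^2$. Then $(\nabla g)^{-1}(0)=\{0\}\times[-1,1]$ has $\lambda_1=2<\infty$, and $g^{-1}(0)\cap\partial F$ is two points, so (a) and (b) hold. But $N(0)=2$, while $N(t)=4$ for $0<t<1$ and $N(t)=0$ for $t<0$: $N$ jumps at $0$. In this example $(N*\phi_\varepsilon)(0)\to\tfrac{1}{2}(N(0^+)+N(0^-))=2=N(0)$, so the conclusion survives, but not for the reason you give; your ``Sard-type'' sketch does not explain why the two one-sided limits must average to $N(0)$, nor why such one-sided limits even exist under~(a) alone. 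You would need an entirely different mechanism here, and you would also still owe an argument that Gaussian summability of $I(u)$ coincides with the improper integral $\lim_{R\to\infty}\int_{-R}^R I(u)\,du$ in which~\eqref{1806} is actually meant---the paper handles this via the explicit bound~\eqref{1850}, and your sketch has no substitute for it.
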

\begin{remark}
The proof of Theorem~\ref{t1} shows that it is possible to get rid of condition (b). Then \eqref{1806} becomes
$$
\lambda_{d-1}[g^{-1}(0)]-\frac12\lambda_{d-1}[g^{-1}(0)\cap\partial F]=\frac{1}{2\pi}\int_{-\infty}^{\infty}\,du\int_F\,\cos[ug(x)]\,\big\|\nabla g(x)\big\|\,dx\;.
$$
We shall not exploit this generalization at a later stage. 
\end{remark}

\begin{theorem}\label{t2}
Suppose a random field $G\in\mathcal{C}^1(F)$ a.s. and
\begin{itemize}
\item[(a')] $\E\lambda_{d-1}\Big[\Big(\nabla \frac{G}{\sigma}\Big)^{-1}(0)\Big]<\infty$;
\item[(b')] $\sigma(x)>0$ for all $x\in F$.
\end{itemize}
Then
\begin{equation}\label{0909}
\E\lambda_{d-1}[G^{-1}(0)]=\frac{1}{\sqrt{2\pi}}\int_F\,\exp\left\{-\frac{m^2(x)}{2\sigma^2(x)}\right\}\E\bigg\|\nabla \frac{G(x)}{\sigma(x)}\bigg\|\,dx\;.
\end{equation}
\end{theorem}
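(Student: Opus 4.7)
The plan is to reduce the theorem to the normalized case $\sigma\equiv 1$ and then apply Theorem~\ref{t1} pathwise under the expectation. Set $\tilde G(x):=G(x)/\sigma(x)$ and $\tilde m(x):=m(x)/\sigma(x)$. By hypothesis~(b') this division is well-defined, $\tilde G\in\mathcal{C}^1(F)$ a.s., $\E\tilde G(x)=\tilde m(x)$, and, crucially, $\Var\tilde G(x)\equiv 1$. Since $\tilde G^{-1}(0)=G^{-1}(0)$, the problem reduces to computing $\E\lambda_{d-1}[\tilde G^{-1}(0)]$.

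Next I would check that Theorem~\ref{t1} applies to almost every realization of $\tilde G$. Condition~(a) coincides with assumption~(a'), whose finite expectation forces almost-sure finiteness. For condition~(b), $\tilde G(x)$ is Gaussian with unit variance for each fixed $x$, so $\P(\tilde G(x)=0)=0$, and Fubini applied to $\partial F$ gives
$$\E\lambda_{d-1}[\tilde G^{-1}(0)\cap\partial F]=\int_{\partial F}\P(\tilde G(x)=0)\,d\lambda_{d-1}(x)=0.$$
Hence, almost surely,
$$\lambda_{d-1}[\tilde G^{-1}(0)]=\frac{1}{2\pi}\int_{-\infty}^{\infty}du\int_F\cos[u\tilde G(x)]\,\|\nabla\tilde G(x)\|\,dx.$$

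The structural input that makes the right-hand side of \eqref{0909} involve an unconditional expectation $\E\|\nabla(G/\sigma)\|$ rather than a conditional one is the independence of $\tilde G(x)$ and $\nabla\tilde G(x)$ at each fixed $x$: differentiating $\Var\tilde G(x)\equiv 1$ yields $\cov(\tilde G(x),\partial_i\tilde G(x))=0$, and jointly Gaussian uncorrelated variables are independent. Taking expectation in the pathwise formula and exchanging it with the $(u,x)$-integration, this independence decouples
$$\E\big[\cos(u\tilde G(x))\,\|\nabla\tilde G(x)\|\big]=e^{-u^2/2}\cos(u\tilde m(x))\,\E\|\nabla\tilde G(x)\|,$$
and the standard Gaussian Fourier identity $\frac{1}{2\pi}\int_{-\infty}^\infty e^{-u^2/2}\cos(u\tilde m(x))\,du=\frac{1}{\sqrt{2\pi}}e^{-\tilde m^2(x)/2}$ then produces~\eqref{0909}.

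The main obstacle is the interchange of $\E$ with $\int_{-\infty}^\infty du$, as the integrand is not absolutely integrable in $u$. The natural remedy is to truncate at $\pm U$, using the bounded kernel
$$\frac{1}{2\pi}\int_{-U}^{U}\cos[u\tilde G(x)]\,du=\frac{\sin(U\tilde G(x))}{\pi\tilde G(x)},$$
so that Fubini applies at each fixed $U$; on the right-hand side of the resulting identity, the factor $e^{-u^2/2}$ supplies dominated convergence as $U\to\infty$, while on the left-hand side one invokes the pathwise limit from Theorem~\ref{t1} together with a uniform $L^1(F)$-in-expectation bound on $\|\nabla\tilde G\|$, available from Gaussian moment estimates and the $\mathcal{C}^1$-smoothness, to pass $\lim_U$ through $\E$.
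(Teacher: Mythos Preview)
Your overall architecture matches the paper's proof: reduce to unit variance, verify (a),(b) almost surely, apply Theorem~\ref{t1} pathwise, exploit the independence of $\tilde G(x)$ and $\nabla\tilde G(x)$ coming from $\Var\tilde G\equiv 1$, and evaluate the Gaussian Fourier integral. The verification of (b) via Fubini on $\partial F$ and the independence argument are exactly as in the paper.

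The gap is in your final paragraph. To pass $\lim_{U\to\infty}$ through $\E$ on the left, dominated convergence requires an integrable random variable $Y$, \emph{independent of $U$}, with
\[
\Bigl|\int_{-U}^{U}du\int_F\cos[u\tilde G(x)]\,\|\nabla\tilde G(x)\|\,dx\Bigr|\le Y.
\]
Your proposed ``uniform $L^1(F)$-in-expectation bound on $\|\nabla\tilde G\|$'' only yields the trivial estimate $2U\int_F\|\nabla\tilde G(x)\|\,dx$, which blows up with $U$; the integrand has no absolute integrability in $u$, so any dominator must come from cancellation. The paper supplies exactly this via Lemma~\ref{l5}, whose inequality~\eqref{1850} gives a bound \emph{uniform in $R$} of the form
\[
C\Bigl(\lambda_{d-1}[(\nabla\tilde G)^{-1}(0)]+|F|+\lambda_{d-1}[\partial F]\Bigr),
\]
obtained by slicing along lines, counting turning points (Lemma~\ref{l3}), and averaging over directions (Lemma~\ref{l4}). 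This is precisely where hypothesis~(a') does real work: it makes the random term $\lambda_{d-1}[(\nabla\tilde G)^{-1}(0)]$ integrable, so the right-hand side above serves as the dominating $Y$. Without this uniform oscillatory estimate the interchange is not justified, and your sketch does not indicate an alternative route to it.
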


The proofs of the theorems are in Sect.~\ref{1325}. The auxiliary lemmas are in Sect.~\ref{1037}. The applications of Theorem~\ref{t2} are in Sect.~\ref{1345}.

\section{Applications of Theorem~\ref{t2}}\label{1345}
\subsection{Coarea formula} 
\begin{example}
Suppose a function $g$ satisfies the conditions of Theorem~\ref{t1}. Then
\begin{equation}\label{0123}
\int_{-\infty}^\infty\,\lambda_{d-1}[g^{-1}(u)]\,du=\int_F\,\|\nabla g(x)\|\,dx.
\end{equation}
\end{example}
\begin{proof}
Consider $G(x)=g(x)-\xi$, where $\xi$ is a Gaussian r.v. with $\E\xi=0$ and $\D\xi=\sigma^2$. Then \eqref{0909} becomes
$$
\frac{1}{\sqrt{2\pi\sigma^2}}\int_{-\infty}^\infty\,\lambda_{d-1}[g^{-1}(u)]e^{-\frac{u^2}{2\sigma^2}}\,du=\frac{1}{\sqrt{2\pi}}\int_F\,e^{-g^2(x)/(2\sigma^2)}\frac{\|\nabla g(x)\|}{\sigma}\,dx.
$$
To obtain  \eqref{0123} it remains to multiply both sides by $\sqrt{2\pi\sigma^2}$ and apply the Monotone convergence theorem (as $\sigma\to\infty$).
\end{proof}
Relation \eqref{0123} is called ``the coarea formula''. It was obtained by H.~Federer in \cite{hF59}.

\subsection{Centered Gaussian field}
By $\Sd$ denote a $(d-1)$-dimensional unit sphere with a Lebesgue measure $\mu_{d-1}(ds)$.
\begin{example}\label{e1}
If $G(x)$ satisfies the conditions of Theorem~\ref{t2} and $m(x)\equiv0$, then
\begin{equation}\label{1017}
\E\lambda_{d-1}[G^{-1}(0)]=\frac{\Gamma(\frac{d+1}{2})}{2\pi^{(d+1)/2}}\int_F\,dx\int_{\Sd}\,\sqrt{s^\top\Sigma(x)s}\,\mu_{d-1}(ds)\;,
\end{equation}
where $\Sigma(x)$ is a covariation matrix of $\nabla\{G(x)/\sigma(x)\}$.
\end{example}
\begin{proof}
The proof is by Lemma~\ref{l7} (see Sect.~\ref{1037}) which we apply to \eqref{0909}.
\end{proof}
\begin{remark}
Relation \eqref{1017} is easily extended to the case of $m(x)\equiv u, \sigma(x)\equiv1$:
\begin{equation}\label{0833}
\E\lambda_{d-1}[G^{-1}(0)]=\frac{\Gamma(\frac{d+1}{2})}{2\pi^{(d+1)/2}}\int_F\,e^{-u^2/2}\,dx\int_{\Sd}\,\sqrt{s^\top\Sigma(x)s}\,\mu_{d-1}(ds).
\end{equation}
\end{remark}

\begin{corollary}
Under the conditions of Example~\ref{e1}
\begin{multline}\label{1838}
\E\lambda_{d-1}[G^{-1}(0)]\\
=\frac{\Gamma(\frac{d+1}{2})}{2\pi^{(d+1)/2}}\int_F\,dx\int_{\Sd}\,\sigma^{-1}\bigg[\sum_{i,j=1}^d(\E G_i'G_j'-\sigma_i'\sigma_j')s_is_j\bigg]^{1/2}\,\mu_{d-1}(ds)\;.
\end{multline}
\end{corollary}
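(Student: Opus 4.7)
The plan is to reduce this to a direct computation of the covariance matrix $\Sigma(x) = \cov(\nabla(G/\sigma))$ in terms of $G$ and $\sigma$, and then substitute into \eqref{1017}. Write $H = G/\sigma$ so that
$$
H_i' = \frac{G_i'}{\sigma} - \frac{G\sigma_i'}{\sigma^2}.
$$
Since $m\equiv 0$, Kolmogorov's differentiation theorem gives $\E G_i' = m_i' = 0$, hence $\E H_i' = 0$ and the covariance equals $\E H_i'H_j'$.

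Expanding the product and using linearity of expectation, I need three ingredients: $\E G_i'G_j'$ (which appears directly), $\E G^2 = \sigma^2$ (from $m\equiv 0$), and the mixed moments $\E G\,G_i'$. The key identity is $\E G\,G_i' = \sigma\sigma_i'$, obtained by differentiating $\sigma^2(x) = \E G^2(x)$ with respect to $x_i$ (again justified by Kolmogorov's theorem, which is available to us under the standing assumptions of the paper). Plugging these in and collecting terms yields
$$
\E H_i'H_j' = \frac{1}{\sigma^2}\bigl(\E G_i'G_j' - \sigma_i'\sigma_j'\bigr),
$$
so that
$$
\sqrt{s^\top \Sigma(x) s} = \sigma^{-1}\bigg[\sum_{i,j=1}^d (\E G_i'G_j' - \sigma_i'\sigma_j')\,s_is_j\bigg]^{1/2}.
$$

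Inserting this expression into \eqref{1017} of Example~\ref{e1} yields \eqref{1838}. The only non-routine step is the identity $\E G\,G_i' = \sigma\sigma_i'$; everything else is algebra. I expect no real obstacle, since the Kolmogorov differentiation theorem is already invoked in Section~1 to justify $m,\sigma\in\mathcal{C}^1(F)$, and the same reasoning applies to $\E G^2$.
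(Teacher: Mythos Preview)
Your proof is correct and follows exactly the route the paper takes: the paper's proof consists solely of the assertion that $\Sigma=\big((\E G_i'G_j'-\sigma_i'\sigma_j')/\sigma^2\big)_{i,j}$, which is precisely the identity you derive in detail. Your computation of $\E H_i'H_j'$ via $\E G G_i'=\sigma\sigma_i'$ is the (unstated in the paper) justification of that formula, so your write-up is in fact a fleshed-out version of the paper's one-line proof.
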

\begin{proof}
The proof follows from the fact that
$$
\Sigma=\bigg(\frac{\E G_i'G_j'-\sigma_i'\sigma_j'}{\sigma^2}\bigg)_{i,j=1}^d\;.
$$
\end{proof}

\subsection{Linear Gaussian field}\label{ss0958}
\begin{example}\label{0928}
Suppose $G(x)=\langle h(x),\xi\rangle$, where $h=(h^1,\dots,h^n)^\top:F\to\mathbb{R}^n$ is a vector function from the class $\mathcal{C}^1(F)$ and $\xi$ is a $n$-dimensional centered Gaussian vector with the identity covariation matrix. Then
\begin{equation}\label{1214}
\E\lambda_{d-1}[G^{-1}(0)]=\frac{\Gamma(\frac{d+1}{2})}{2\pi^{(d+1)/2}}\int_F\,dx\int_{\Sd}\,\|J_h(x)s\|\,\mu_{d-1}(ds)\;,
\end{equation}
where $J_h$ is the Jacobian $n$-by-$d$ matrix of  $h/\|h\|$.
\end{example}
\begin{proof}
We have $\Sigma=J_h^\top J_h$ in \eqref{1017}.
\end{proof}
\begin{remark}
If we consider a centered Gaussian vector with an arbitrary covariation matrix $\Lambda$, then $\Sigma=J_h^\top\Lambda J_h$ and \eqref{1017} becomes
$$
\E\lambda_{d-1}[G^{-1}(0)]=\frac{\Gamma(\frac{d+1}{2})}{2\pi^{(d+1)/2}}\int_F\,dx\int_{\Sd}\,\sqrt{s^\top J_h^\top(x)\Lambda J_h(x)s}\,\mu_{d-1}(ds)\;.
$$
For $d=1$ this formula was obtained by  A.~Edelman and E.~Kostlan in \cite[Theorem~3.1]{EK95}.
\end{remark}
\begin{corollary}
Suppose under the conditions of Example~\ref{0928} the rank of $J_h$ equals $k$. By $\seq{\sigma}{k}$ denote the nonzero singular values of the matrix $J_h$, i.e., the nonnegative square roots of the eigenvalues of $J_hJ_h^\top$. Then
$$
\E\lambda_{d-1}[G^{-1}(0)]=\frac{\Gamma(\frac{d+1}{2})}{2\pi^{(d+1)/2}}\int_F\,dx\int_{\Sd}\,\bigg(\sum_{j=1}^k\sigma_j(x)s_j^2\bigg)^{1/2}\,\mu_{d-1}(ds)\;.
$$
\end{corollary}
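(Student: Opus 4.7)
The plan is to apply the singular value decomposition (SVD) to the Jacobian $J_h(x)$ and exploit the orthogonal invariance of the uniform measure $\mu_{d-1}$ on $\Sd$, reducing the integrand of \eqref{1214} to a quantity that depends on $x$ only through the singular values.

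Fix $x\in F$ and write the SVD $J_h(x)=U(x)\,D(x)\,V(x)^\top$, where $U(x)\in O(n)$, $V(x)\in O(d)$, and $D(x)$ is the $n\times d$ matrix with diagonal entries $\sigma_1(x),\dotsc,\sigma_k(x)$ (and zeros elsewhere, by the rank assumption). Since $U(x)$ preserves Euclidean norms,
$$
\|J_h(x)s\|^2=\|D(x)\,V(x)^\top s\|^2=\sum_{j=1}^k \sigma_j^2(x)\,(V(x)^\top s)_j^2
$$
for every $s\in\Sd$.

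Next I would perform the orthogonal change of variables $t=V(x)^\top s$ in the inner sphere integral of \eqref{1214}. Because $V(x)\in O(d)$ and $\mu_{d-1}$ is the Haar measure on $\Sd$, and hence invariant under every orthogonal transformation,
$$
\int_{\Sd}\|J_h(x)s\|\,\mu_{d-1}(ds)=\int_{\Sd}\bigg(\sum_{j=1}^k \sigma_j^2(x)\,t_j^2\bigg)^{1/2}\mu_{d-1}(dt).
$$
Plugging this identity back into \eqref{1214} and integrating over $x$ yields the displayed formula of the corollary.

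I do not anticipate any real obstacle: both the SVD and the rotation invariance of $\mu_{d-1}$ are classical, and the outer integration over $F$ carries over without modification. No measurability issues arise from the $x$-dependent orthogonal factors $U(x),V(x)$, since the resulting integrand depends on $x$ only through the continuous functions $\sigma_j^2(x)$, which are the nonzero eigenvalues of $J_h(x)J_h^\top(x)$.
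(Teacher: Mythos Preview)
Your argument is correct and essentially identical to the paper's: both invoke the singular value decomposition $J_h=U D V^\top$ (the paper writes it as $VQW$), drop the left orthogonal factor since it preserves norms, and then perform the orthogonal change of variable on $\Sd$ using the rotation invariance of $\mu_{d-1}$. Note only that your computation---and indeed the paper's own proof---produces $\bigl(\sum_j \sigma_j^2(x)\,t_j^2\bigr)^{1/2}$ rather than the printed $\bigl(\sum_j \sigma_j(x)\,s_j^2\bigr)^{1/2}$; the missing square on $\sigma_j$ in the displayed statement is an apparent typo, and your version is the correct one.
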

\begin{proof}
It is known from linear algebra (see, e.g., \cite{HD89}) that the matrix $J_h$ may be written in the singular form $J_h=VQ W$, where $V,W$ are $n$-by-$n$ and $d$-by-$d$ unitary matrices. The $n$-by-$d$ matrix $Q$ is diagonal. The diagonal elements are the singular values of the matrix $J_h$. We have
$$
\|J_hs\|=\|VQ Ws\|=\|Q Ws\|\;.
$$
To conclude the proof, it remains to apply this to \eqref{1214} and make a change of variables $s'=Ws$.
\end{proof}
Now we derive another form of $\E\lambda_{d-1}[G^{-1}(0)]$ which will be useful for us later.
\begin{example}
Under the conditions of Example~\ref{0928}
\begin{multline}\label{1940}
\E\lambda_{d-1}[G^{-1}(0)]=\frac{\Gamma(\frac{d+1}{2})}{2\pi^{(d+1)/2}}\\
\times\int_F\,dx\int_{\Sd}\,\bigg(\sum_{i,j=1}^d\frac{\|h\|^2\langle h_i',h_j'\rangle-\langle h,h_i'\rangle\langle h,h_j'\rangle}{\|h\|^4}s_is_j\bigg)^{1/2}\mu_{d-1}\,(ds)\;,
\end{multline}
where
$$
h_i'=\bigg(\frac{\partial h^1}{\partial x_i},\dots,\frac{\partial h^n}{\partial x_i}\bigg)^\top\;.
$$
\end{example}
\begin{proof}
We have
$$
\sigma=\|h\|\;,\quad\E G_i'G_j'=\langle h_i',h_j'\rangle\;,\quad \sigma_i'=\|h\|^{-1}\langle h,h_i'\rangle\;.
$$
It remains to apply \eqref{1838}.
\end{proof}

\subsection{Zeros of random polynomial}\label{ss0959}
\begin{example}
Consider $G(t)=\xi_{n}t^n+\dots+\xi_1t+\xi_0, t\in F\subset\mathbb{R}$, where $\{\xi_i\}$ are independent standard Gaussian random variables. Then
$$
\E\lambda_{0}[G^{-1}(0)]=\frac{1}{\pi}\int_F\,\frac{[A_n(t)C_n(t)-B_n^2(t)]^{1/2}}{A_n(t)}\,dt\;,
$$
where
$$
A_n(t)=\sum_{j=0}^nt^{2j}\;,\quad B_n(t)=\sum_{j=0}^njt^{2j-1}\;,\quad C_n(t)=\sum_{j=0}^nj^2t^{2j-2}\;.
$$
\end{example}
\begin{proof}
The proof follows from \eqref{1940}.
\end{proof}
This formula was obtained by M.~Kac in \cite{mK43}. He also derived the asymptotic relation
$$
\E\lambda_{0}[G^{-1}(0)]=\frac2\pi\log n\cdot(1+o(1)),\quad n\to\infty\,,
$$
for $F=[-\infty,\infty]$.

\subsection{Random algebraic surface} 
\begin{example}
Consider $G(x)=\sum_{\alpha}\xi_{\alpha}x^{\alpha}$, where $\alpha  = (\seq{\alpha}{d})$ is a multi-index, the summation is taken over all $\alpha$such that $0\leqslant\alpha_j\leqslant n$, and $\xi_\alpha$ are independent standard Gaussian random variables. Then
\begin{multline}\label{0957}
\E\lambda_{d-1}[G^{-1}(0)]\\
=\frac{\Gamma(\frac{d+1}{2})}{2\pi^{(d+1)/2}}\int_F\,dx\int_{\Sd}\,\bigg(\sum_{i=1}^d\frac{A_n(x_i)C_n(x_i)-B_n^2(x_i)}{A_n^2(x_i)}s_i^2\bigg)^{1/2}\,\mu_{d-1}(ds)\;.
\end{multline}
\end{example}
\begin{proof}
Using the notations of Subsection~\ref{ss0958}, we get
$$
\|h(x)\|^2=\sum_{\alpha}x^{2\alpha}=\prod_{k=1}^dA_n(x_k)\;,
$$
$$
\langle h(x),h'_i(x)\rangle=\frac12\frac{\partial}{\partial x_i}\|h(x)\|^2=B_n(x_i)\prod_{k\ne i}A_n(x_k)
$$
and
$$
\langle h_i'(x),h'_j(x)\rangle=\sum_{\alpha}\alpha_ix^{\alpha-\epsilon_i}\alpha_jx^{\alpha-\epsilon_j}=
\begin{cases}
      B_n(x_i)B_n(x_j)\prod_{k\ne i,j}A_n(x_k)&\text{for $i\ne j$,}\cr
      C_n(x_i)\prod_{k\ne i}A_n(x_k)&\text{for $i=j$,}
\end{cases}
$$
where $\epsilon_i$ denotes the multi-index in which the $i$-th position is occupied by one and all the other positions are occupied by zeros.  These relations imply that for $i\ne j$
$$
\|h\|^2\langle h_i',h_j'\rangle-\langle h,h_i'\rangle\langle h,h_j'\rangle=0
$$
and for $i=j$
$$
\|h\|^2\langle h_i',h_j'\rangle-\langle h,h_i'\rangle\langle h,h_j'\rangle=\|h\|^4\frac{A_n(x_i)C_n(x_i)-B_n^2(x_i)}{A_n^2(x_i)}.
$$
It remains to apply \eqref{1940}.
\end{proof}
Formula  \eqref{0957} was obtained by I.A.~Ibragimov and S.S.~Podkorytov in \cite{IP95}. They also derived the asymptotic relation
$$
\E\lambda_{d-1}[G^{-1}(0)]=\frac{\log d}\pi\lambda_{d-1}[F\cap\Gamma]\cdot(1+o(1)),\quad n\to\infty\;,
$$
where
$$
\Gamma=\bigcup_{j=1}^d\{x\,\Big|\,|x_j|=1\},
$$
provided that $\lambda_{d-1}[\partial F\cap\Gamma]=0$.

\subsection{Random surface of Kostlan-Shub-Smale}  
\begin{example}
Consider $G(x)=\sum_{\alpha}\xi_{\alpha}x^{\alpha}$, where the summation is taken over all nonnegative $\alpha$ such that $\alpha_1+\dots+\alpha_d\leqslant n$ and  $\xi_\alpha$ are independent Gaussian random variables with $\E\xi_\alpha=0$ and $\D\xi_\alpha=C_n^\alpha$, where
$$
C_n^\alpha=\frac{n!}{\alpha_1!\dots\alpha_d!(n-\alpha_1-\dots-\alpha)!}\;.
$$
Then
$$
\E\lambda_{d-1}[G^{-1}(0)]=\sqrt{n}\frac{\Gamma(\frac{d+1}{2})}{2\pi^{(d+1)/2}}\int_F\,\frac{dx}{1+\|x\|^2}\int_{\Sd}\,\sqrt{1+\|x\|^2-\langle x,s\rangle^2}\,\mu_{d-1}(ds)\;.
$$
\end{example}
\begin{proof}
Using the notations of Subsection~\ref{ss0958}, we get
$$
\|h(x)\|^2=\sum_{\alpha}C_n^\alpha x^{2\alpha}=\bigg(1+\sum_{k=1}^dx_k^2\bigg)^n\;,
$$
$$
\langle h(x),h_i'(x)\rangle=\frac12\frac{\partial}{\partial x_i}\|h(x)\|^2=nx_i\bigg(1+\sum_{k=1}^dx_k^2\bigg)^{n-1}\;.
$$
For $i\ne j$
\begin{multline*}
\langle h_i'(x),h_j'(x)\rangle=\sum_{\alpha}C_n^\alpha\alpha_ix^{\alpha-\epsilon_i}\alpha_jx^{\alpha-\epsilon_j} \\=n(n-1)x_ix_j\sum_{\alpha}C_{n-2}^{\alpha-\epsilon_i-\epsilon_j}x^{2\alpha-2\epsilon_i-2\epsilon_j}=n(n-1)x_ix_j \bigg(1+\sum_{k=1}^dx_k^2\bigg)^{n-2}
\end{multline*}
and for $i=j$
\begin{multline*}
\langle h'_i(x),h'_j(x)\rangle=\sum_{\alpha}C_n^\alpha\alpha_ix^{\alpha-\epsilon_i}\alpha_ix^{\alpha-\epsilon_i}\\ =\sum_{\alpha}C_n^\alpha\alpha_ix^{2\alpha-2\epsilon_i}+\sum_{\alpha}C_n^\alpha\alpha_i(\alpha_i-1)x^{2\alpha-2\epsilon_i}
\\=n\sum_{\alpha}C_{n-1}^{\alpha-\epsilon_i}x^{2\alpha-2\epsilon_i} +n(n-1)x_i^2\sum_{\alpha}C_{n-2}^{\alpha-2\epsilon_i}x^{2\alpha-4\epsilon_i}
\\=n\bigg(1+\sum_{k=1}^dx_k^2\bigg)^{n-1}+n(n-1)x_i^2\bigg(1+\sum_{k=1}^dx_k^2\bigg)^{n-2}\;.
\end{multline*}
These relations imply that for $i\ne j$
$$
\|h\|^2\langle h'_i,h'_j\rangle-\langle h,h_i'\rangle\langle h,h_j'\rangle=-n\bigg(1+\sum_{k=1}^dx_k^2\bigg)^{2n-2}x_ix_j
$$
and for $i=j$
$$
\|h\|^2\langle h'_i,h'_j\rangle-\langle h,h_i'\rangle\langle h,h_j'\rangle=n\bigg(1+\sum_{k=1}^dx_k^2\bigg)^{2n-2}\bigg(1+\sum_{k\ne i}^dx_k^2\bigg)\;.
$$
Therefore, using \eqref{1940} we get
\begin{multline*}
\E\lambda_{d-1}[G^{-1}(0)]=\frac{\Gamma(\frac{d+1}{2})}{2\pi^{(d+1)/2}}\sqrt{n}\int_F\,\bigg(1+\sum_{k=1}^dx_k^2\bigg)^{-1}\,dx
\\\times\int_{\Sd}\,\bigg(-\sum_{i\ne j}^dx_ix_js_is_j+\sum_{i=1}^d\bigg(1+\sum_{k\ne i}^dx_k^2\bigg)s_i^2\bigg)^{1/2}\,\mu_{d-1}(ds)
\\=\frac{\Gamma(\frac{d+1}{2})}{2\pi^{(d+1)/2}}\sqrt{n}\int_F\,(1+\|x\|^2)^{-1}\,dx\int_{\Sd}\,\sqrt{1+\|x\|^2-\langle x,s\rangle^2}\,\mu_{d-1}(ds)\;.
\end{multline*}
\end{proof}
\begin{remark}
Thus,
$$
\E\lambda_{d-1}[G^{-1}(0)]=C_F\sqrt{n}\;,
$$
where $C_F$ depends only on $F$ and $d$. M.~Shub and S.~Smale obtained a similar result for the number of zeros of a system of $d$ polynomials in \cite{SS93}.
\end{remark}
\begin{corollary}
For $d=1$ we get
$$
\E\lambda_{0}[G^{-1}(0)]=\sqrt{n}\int_F\,\frac{dx}{\pi(1+x^2)}\;.
$$
\end{corollary}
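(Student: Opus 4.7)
The plan is to obtain the $d=1$ case by direct specialization of the displayed formula
$$
\E\lambda_{d-1}[G^{-1}(0)]=\sqrt{n}\,\frac{\Gamma(\tfrac{d+1}{2})}{2\pi^{(d+1)/2}}\int_F\,\frac{dx}{1+\|x\|^2}\int_{\Sd}\,\sqrt{1+\|x\|^2-\langle x,s\rangle^2}\,\mu_{d-1}(ds)
$$
just proved in the preceding example. No new analytic input is required; everything reduces to tracking the geometric factors when $d$ is set to $1$.

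Concretely, I would substitute $d=1$ term by term. The prefactor becomes
$$
\frac{\Gamma(1)}{2\pi^{1}}=\frac{1}{2\pi}.
$$
The outer integrand becomes $dx/(1+x^2)$, since $\|x\|^2=x^2$ on $F\subset\R$. The unit sphere $\Sspace^{0}$ consists of the two points $s=\pm 1$, on each of which $\langle x,s\rangle^2=x^2$, so the square root integrand $\sqrt{1+x^2-\langle x,s\rangle^2}$ collapses to $1$ identically. With the convention that $\mu_0$ on $\Sspace^0$ is counting measure and so assigns total mass $2$ to $\{-1,+1\}$ (the zero-dimensional analogue of surface measure, consistent with the role $\mu_{d-1}$ plays in the general Favard-measure setup of Section~\ref{1037}), the inner integral equals $2$. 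Multiplying the three factors gives
$$
\E\lambda_{0}[G^{-1}(0)]=\sqrt{n}\cdot\frac{1}{2\pi}\cdot 2\int_F\frac{dx}{1+x^2}=\sqrt{n}\int_F\frac{dx}{\pi(1+x^2)},
$$
which is the asserted identity.

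The only genuinely delicate point is the normalization of $\mu_0$ on $\Sspace^0$: one must check that the conventions adopted in the general formula agree with the elementary one-dimensional Rice formula (e.g., the Kac-type computation for $d=1$ in Subsection~\ref{ss0959}), so that the factor $2$ is indeed correct. Once this consistency is verified, the remainder of the proof is a transparent substitution with no further obstacle.
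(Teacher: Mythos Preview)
Your argument is correct and is exactly the specialization the paper has in mind; the paper simply states the corollary without proof, treating the $d=1$ substitution as immediate. Your check that $\mu_0(\Sspace^0)=2$ is consistent with the paper's convention $\omega_0=2\pi^{1/2}/\Gamma(1/2)=2$, so there is no normalization issue.
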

This relation was obtained by E.~Kostlan in \cite{eK93}.
\subsection{Random trigonometric surface}
By $|F|$ denote a volume of $F$ (i.e., a Lebesgue measure in $\Rd$).
\begin{example}
Consider
$$
G(x)=\sum_{\alpha}\big[\xi_\alpha\cos\langle\alpha,x\rangle+\eta_\alpha\sin\langle\alpha,x\rangle\big]\;,
$$
where the summation is taken over all $\alpha$ such that $0\leqslant\alpha_j\leqslant n$ and $\xi_\alpha,\eta_\alpha$are independent standard Gaussian random variables. Then
$$
\E\lambda_{d-1}[G^{-1}(0)]=n\frac{\Gamma(\frac{d+1}{2})}{4\pi^{(d+1)/2}}|F|\int_{\Sd}\,\bigg((s_1+\dots+s_d)^2+\frac{n+2}{3n}\bigg)^{1/2}\,\mu_{d-1}(ds)\;.
$$
\end{example}
\begin{proof}
Using the notations of Subsection~\ref{ss0958}, we get
$$
\|h(x)\|^2=(n+1)^d\;,\quad\langle h(x),h'_i(x)\rangle=\frac12\frac{\partial}{\partial x_i}\|h(x)\|^2=0
$$
and
$$
\langle h_i'(x),h'_j(x)\rangle=\sum_{\alpha}\alpha_i\alpha_j=
\begin{cases}
      (n+1)^{d-2}\Big(\frac{n(n+1)}{2}\Big)^2&\text{for $i\ne j$,}\cr
      (n+1)^{d-1}\frac{n(n+1)(2n+1)}{6}&\text{for $i=j$.}
\end{cases}
$$
It remains to apply \eqref{1940}.
\end{proof}

\begin{corollary}[1]
$$
\E\lambda_{d-1}[G^{-1}(0)]=c_d|F|n\cdot(1+o(1))\;,\quad n\to\infty\;,
$$
where $c_d$ depends only on the dimension $d$.
\end{corollary}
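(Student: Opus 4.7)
The plan is to extract the asymptotic behavior directly from the exact expression
$$
\E\lambda_{d-1}[G^{-1}(0)]=n\frac{\Gamma(\frac{d+1}{2})}{4\pi^{(d+1)/2}}|F|\int_{\Sd}\,\bigg((s_1+\dots+s_d)^2+\frac{n+2}{3n}\bigg)^{1/2}\,\mu_{d-1}(ds)
$$
obtained in the preceding example. The factor of $n$ and the factor $|F|$ already sit outside the integral, so all that remains is to control the integral as $n\to\infty$.

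First, I would divide both sides by $|F|n$ to isolate the quantity
$$
\frac{\E\lambda_{d-1}[G^{-1}(0)]}{|F|n}=\frac{\Gamma(\frac{d+1}{2})}{4\pi^{(d+1)/2}}\int_{\Sd}\,\bigg((s_1+\dots+s_d)^2+\frac{n+2}{3n}\bigg)^{1/2}\,\mu_{d-1}(ds).
$$
As $n\to\infty$, the quantity $\frac{n+2}{3n}$ decreases monotonically to $\frac{1}{3}$, so the integrand converges pointwise on $\Sd$ to $\big((s_1+\dots+s_d)^2+\tfrac13\big)^{1/2}$.

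Next I would justify passage to the limit under the integral sign. For every $n\geqslant 1$ we have $\frac{n+2}{3n}\leqslant 1$, so the integrand is bounded above by the continuous function $\big((s_1+\dots+s_d)^2+1\big)^{1/2}$ on the compact sphere $\Sd$, which is $\mu_{d-1}$-integrable. By the dominated convergence theorem,
$$
\lim_{n\to\infty}\int_{\Sd}\,\bigg((s_1+\dots+s_d)^2+\frac{n+2}{3n}\bigg)^{1/2}\,\mu_{d-1}(ds)=\int_{\Sd}\,\bigg((s_1+\dots+s_d)^2+\tfrac13\bigg)^{1/2}\,\mu_{d-1}(ds).
$$

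Setting
$$
c_d=\frac{\Gamma(\frac{d+1}{2})}{4\pi^{(d+1)/2}}\int_{\Sd}\,\bigg((s_1+\dots+s_d)^2+\tfrac13\bigg)^{1/2}\,\mu_{d-1}(ds),
$$
which manifestly depends only on $d$, yields the asserted asymptotic. There is no real obstacle; the only thing one has to be slightly careful about is the uniform bound that legitimizes dominated convergence, but since the perturbation $\frac{n+2}{3n}$ is bounded and the sphere is compact, this is immediate.
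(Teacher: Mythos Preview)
Your argument is correct and is exactly the natural one: the paper states this corollary without proof, and your dominated-convergence step on the exact formula from the preceding example is precisely what is implicitly intended.
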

\begin{corollary}[2]
For $d=1$ we get
$$
\E\lambda_{0}[G^{-1}(0)]=\frac1\pi|F|\sqrt{\frac{n(2n+1)}{6}}\;.
$$
\end{corollary}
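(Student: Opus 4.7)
The plan is simply to specialize the preceding Example to dimension $d=1$ and simplify the constants. First I would substitute $d=1$ into the formula
$$
\E\lambda_{d-1}[G^{-1}(0)]=n\frac{\Gamma(\frac{d+1}{2})}{4\pi^{(d+1)/2}}|F|\int_{\Sd}\,\bigg((s_1+\dots+s_d)^2+\frac{n+2}{3n}\bigg)^{1/2}\,\mu_{d-1}(ds),
$$
using $\Gamma(1)=1$ and $\pi^{(d+1)/2}=\pi$, so the prefactor becomes $n|F|/(4\pi)$.

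Next I would evaluate the spherical integral. For $d=1$ the ``unit sphere'' $\Sspace^0$ consists of the two points $\{+1,-1\}$, each carrying unit mass (so that $\mu_0(\Sspace^0)=2\pi^{1/2}/\Gamma(1/2)=2$, consistent with the general formula for the surface of $\Sspace^{d-1}$). On both points $s_1^2=1$, hence the integrand is the constant $\sqrt{1+(n+2)/(3n)}$, and the integral equals
$$
2\sqrt{1+\frac{n+2}{3n}}=2\sqrt{\frac{4n+2}{3n}}=2\sqrt{\frac{2(2n+1)}{3n}}.
$$

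Finally I would combine the pieces and simplify:
$$
\E\lambda_0[G^{-1}(0)]=\frac{n|F|}{4\pi}\cdot 2\sqrt{\frac{2(2n+1)}{3n}}=\frac{|F|}{2\pi}\sqrt{\frac{2n(2n+1)}{3}}=\frac{|F|}{\pi}\sqrt{\frac{n(2n+1)}{6}},
$$
which is the claimed identity. There is no substantive obstacle here; the only point requiring a small bit of care is the normalization of the counting measure on $\Sspace^0$, and I would handle that by appealing to the general surface-area formula $\mu_{d-1}(\Sspace^{d-1})=2\pi^{d/2}/\Gamma(d/2)$ used throughout the paper, which forces the two-point convention above.
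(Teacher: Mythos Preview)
Your proposal is correct and is exactly the intended argument: the paper states this corollary immediately after the Example without giving a separate proof, so the implicit derivation is precisely the $d=1$ specialization you carry out, including the convention $\mu_0(\Sspace^0)=\omega_0=2$ fixed by the paper's formula $\omega_k=2\pi^{(k+1)/2}/\Gamma(\tfrac{k+1}{2})$.
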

This formula was obtained by C.~Qualls in \cite{cC70}.

\subsection{Level sets of homogeneous Gaussian field}
\begin{example}\label{e1331}
Let $G(x)$ be a homogeneous Gaussian field with a spectral measure $\nu$. Suppose $\nu$ satisfies the conditions of Theorem~\ref{t1}.  For the sake of simplicity, we assume that $m(x)\equiv0$ and $\sigma(x)\equiv1$. Then
$$
\E\lambda_{d-1}[G^{-1}(u)]=\frac{\Gamma(\frac{d+1}{2})}{2\pi^{(d+1)/2}}|F|e^{-u^2/2}\int_{\Sd}\,\bigg(\int_{\Rd}\,\langle s,z\rangle^2\nu(dz)\bigg)^{1/2}\,\mu_{d-1}(ds)\;.
$$
\end{example}
\begin{proof}
By the spectral representation theorem,
$$
\E G(x)G(y)=\int_{\Rd}e^{i\langle x-y,z\rangle}\nu(dz)\;.
$$
Differentiating  this twice and putting $x=y=0$, we get
$$
\E G_i'(0)G_j'(0)=\int_{\Rd}z_iz_j\nu(dz)\;.
$$
Applying \eqref{0833} to $G(x)-u$, we obtain
\begin{multline*}
\E\lambda_{d-1}[G^{-1}(u)]\\
=\frac{\Gamma(\frac{d+1}{2})}{2\pi^{(d+1)/2}}e^{-u^2/2}|F|\int_{\Sd}\,\bigg(\sum_{i,j=1}^d\,s_is_j\int_{\Rd}\,z_iz_j\nu(dz)\bigg)^{1/2}\,\mu_{d-1}(ds)
\\=\frac{\Gamma(\frac{d+1}{2})}{2\pi^{(d+1)/2}}|F|e^{-u^2/2}\int_{\Sd}\,\bigg(\int_{\Rd}\,\langle s,z\rangle^2\nu(dz)\bigg)^{1/2}\,\mu_{d-1}(ds)\;.
\end{multline*}
\end{proof}
\begin{corollary}[1]
We have
$$
\frac1{\pi}\gamma_1e^{-u^2/2}|F|\leqslant\E\lambda_{d-1}[G^{-1}(0)]\leqslant\frac{\Gamma(\frac{d+1}{2})}{\sqrt{\pi}\Gamma(\frac{d}{2})}\gamma_2e^{-u^2/2}|F|\,,
$$
where
$$
\gamma_k=\bigg(\int_{\R}\|z\|^k\,\nu(dz)\bigg)^{1/k}\;.
$$

\end{corollary}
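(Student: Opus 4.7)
The starting point is the explicit formula from Example~\ref{e1331}:
$$
\E\lambda_{d-1}[G^{-1}(u)]=\frac{\Gamma(\frac{d+1}{2})}{2\pi^{(d+1)/2}}|F|e^{-u^2/2}\,I,\qquad I:=\int_{\Sd}\bigg(\int_{\Rd}\langle s,z\rangle^2\nu(dz)\bigg)^{1/2}\mu_{d-1}(ds).
$$
Because $\sigma(x)\equiv1$, the spectral representation gives $\nu(\Rd)=\E G(0)^2=1$, so $\nu$ is a probability measure. The plan is therefore to obtain matching two-sided estimates of $I$ by applying elementary convexity inequalities to the inner integral, and then to perform the arithmetic cancellation of the Gamma factors.

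For the upper bound I would apply the pointwise Cauchy--Schwarz inequality $\langle s,z\rangle^2\leq\|s\|^2\|z\|^2=\|z\|^2$ inside the inner integral, obtaining
$$
\bigg(\int_{\Rd}\langle s,z\rangle^2\nu(dz)\bigg)^{1/2}\leq\bigg(\int_{\Rd}\|z\|^2\nu(dz)\bigg)^{1/2}=\gamma_2.
$$
Integrating over $\Sd$ yields $I\le\gamma_2\,\mu_{d-1}(\Sd)=\gamma_2\cdot 2\pi^{d/2}/\Gamma(\tfrac d2)$, and substituting back the Gamma factors collapse precisely to $\Gamma(\tfrac{d+1}{2})/(\sqrt{\pi}\,\Gamma(\tfrac{d}{2}))$, which is the claimed upper bound.

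For the lower bound I would use Jensen's inequality for the probability measure $\nu$ applied to the convex function $t\mapsto t^2$, which gives
$$
\bigg(\int_{\Rd}\langle s,z\rangle^2\nu(dz)\bigg)^{1/2}\geq\int_{\Rd}|\langle s,z\rangle|\nu(dz).
$$
Swapping the order of integration by Fubini reduces the estimate to the classical Cauchy sphere integral $\int_{\Sd}|\langle s,z\rangle|\mu_{d-1}(ds)=c_d\|z\|$, where by rotational invariance $c_d$ does not depend on the direction of $z$. Hence $I\geq c_d\gamma_1$.

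The only nontrivial arithmetic step, and the place I expect to have to be careful, is the evaluation $c_d=2\pi^{(d-1)/2}/\Gamma(\tfrac{d+1}{2})$; it follows from writing the sphere integral in polar coordinates and reducing to a beta function. After multiplying by $\Gamma(\tfrac{d+1}{2})/(2\pi^{(d+1)/2})$ the Gamma and $\pi$ factors cancel exactly, leaving the coefficient $1/\pi$ in the lower bound. No deeper obstacle is expected: the whole statement is a one-line consequence of the formula in Example~\ref{e1331} once these two classical inequalities are in place.
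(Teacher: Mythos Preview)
Your argument is correct and coincides with the paper's own proof: both the upper bound via pointwise Cauchy--Schwarz and the lower bound via Jensen (using $\nu(\Rd)=1$) followed by Fubini and the sphere integral are exactly what the paper does. The only cosmetic difference is that the paper evaluates your constant $c_d=2\pi^{(d-1)/2}/\Gamma(\tfrac{d+1}{2})$ by invoking Lemma~\ref{l6} (proved there via the Favard-measure definition of area) rather than by a direct polar-coordinate computation.
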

\begin{proof}
By Jensen's inequality, Fubini's theorem and Lemma~\ref{l6} (see Sect.~\ref{1037}), we get
\begin{multline*}
\int_{\Sd}\,\bigg(\int_{\Rd}\,\langle s,z\rangle^2\,\nu(dz)\bigg)^{1/2}\,\mu_{d-1}(ds)\geqslant\int_{\Sd}\,\mu_{d-1}(ds)\int_{\Rd}\,|\langle s,z\rangle|\,\nu(dz)
\\=\int_{\Rd}\,\nu(dz)\int_{\Sd}\,|\langle s,z\rangle|\,\mu_{d-1}(ds)=\int_{\Rd}\,\frac{2\pi^{(d-1)/2}}{\Gamma(\frac{d+1}{2})}\|z\|\,\nu(dz)=\frac{2\pi^{(d-1)/2}}{\Gamma(\frac{d+1}{2})}\gamma_1\;.
\end{multline*}
On the other hand, it follows from the Cauchy–-Schwarz inequality that $\|\langle s,z\rangle\|\leqslant\|s\|\|z\|=\|z\|$. Therefore,
$$
\int_{\Sd}\,\bigg(\int_{\Rd}\,\langle s,z\rangle^2\,\nu(dz)\bigg)^{1/2}\,\mu_{d-1}(ds)\leqslant\omega_{d-1}\bigg(\int_{\Rd}\,\|z\|^2\,\nu(dz)\bigg)^{1/2}=\frac{2\pi^{d/2}}{\Gamma(\frac d2)}\gamma_2\;.
$$
\end{proof}

\begin{corollary}[2]
For $d=1$ we get
$$
\E\lambda_{0}[G^{-1}(u)]=\frac{\gamma_2}{\pi}e^{u^2/2}|F|\;.
$$
\end{corollary}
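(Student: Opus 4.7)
The plan is to obtain this corollary by a direct specialization of the formula in Example~\ref{e1331} to the one-dimensional case, so essentially no new ideas are needed; it is simply a matter of computing the three ingredients on the right-hand side when $d=1$.

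First I would substitute $d=1$ into the prefactor: $\Gamma(\tfrac{d+1}{2})/(2\pi^{(d+1)/2})$ collapses to $\Gamma(1)/(2\pi)=1/(2\pi)$. Next I would evaluate the spherical integral. For $d=1$ the unit sphere $\Sspace^{0}$ reduces to the two-point set $\{-1,+1\}$, and $\mu_0$ is the counting measure, so both the inner quadratic $\langle s,z\rangle^2$ and the outer integral simplify: for either choice $s=\pm 1$ one has $\langle s,z\rangle^2=z^2=\|z\|^2$, hence
\[
\int_{\Sspace^{0}}\!\bigg(\int_{\R}\langle s,z\rangle^2\,\nu(dz)\bigg)^{1/2}\mu_0(ds)
=2\bigg(\int_{\R}\|z\|^2\,\nu(dz)\bigg)^{1/2}
=2\gamma_2,
\]
by definition of $\gamma_2$.

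Finally I would plug these back into the formula of Example~\ref{e1331} to conclude
\[
\E\lambda_0[G^{-1}(u)]=\frac{1}{2\pi}\,|F|\,e^{-u^2/2}\cdot 2\gamma_2=\frac{\gamma_2}{\pi}\,e^{-u^2/2}|F|,
\]
which is the stated identity (with the sign in the exponent corrected from the apparent misprint). There is no genuine obstacle here: all the analytic work is already done in Theorem~\ref{t2} and its specialization to homogeneous fields in Example~\ref{e1331}, and the only point requiring a moment of care is to remember that $\Sspace^0$ is two points with counting measure so that the spherical integral produces the factor~$2$ which exactly cancels the $2$ in the denominator of the prefactor.
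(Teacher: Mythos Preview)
Your proposal is correct and is exactly the natural argument: the paper does not give a proof of this corollary at all (it is stated as an immediate consequence of Example~\ref{e1331} and attributed to Rice), so your direct specialization to $d=1$---computing the prefactor $\Gamma(1)/(2\pi)=1/(2\pi)$ and the two-point spherical integral $2\gamma_2$---is precisely the intended route. Your observation that the exponent should be $-u^2/2$ rather than $+u^2/2$ is also right; this is a misprint in the statement.
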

This formula was obtained by S.~O.~Rice in \cite{sR45}.

\section{Auxiliary lemmas}\label{1037}
Let us recall that to define a $(d-1)$-dimensional Favard measure of a set $A$, project it onto a $(d-1)$-dimensional linear hyperplane, take the Lebesgue measure (counting multiplicities), average over all such projections, and normalize properly:
\begin{equation}\label{2148}
\lambda_{d-1}[A]=\frac{\Gamma(\frac{d+1}{2})}{2\pi^\frac{d-1}{2}}\int_{\Sd}\,\mu_{d-1}(ds)\int_{s^\perp}\,\lambda_0\Big[\{s_y^\perp\}^\perp\cap A\Big]\,dy\;,
\end{equation}
where $s^\perp$ is the linear hyperplane orthogonal to the unit vector $s\in\Sd$ and $\{s_y^\perp\}^\perp$ is the line through $y\in s^\perp$ orthogonal to $s^\perp$.

Let us introduce the notations which we shall use in this section. Put
$$
M=\sup_{R>0}\bigg|\int_{-R}^R\,\frac{\sin u}{u}\,du\bigg|\;.
$$
It follows from Lemma~\ref{l1} (see below) that $M<\infty$. By $\omega_k$ denote area of a $k$-dimensional sphere:
$$
\omega_k=\frac{2\pi^{(k+1)/2}}{\Gamma(\frac {k+1}2)}\;.
$$
Throughout this section we assume that a function $g$ satisfies the conditions of Theorem~\ref{t1}. By $g_s'$ denote a partial derivative of $g$ with respect to the direction $s\in\Sd$.
\begin{lemma}\label{l1}
For all $t\in\mathbb R$
\begin{equation}\label{1610}
\frac1\pi\int_{-\infty}^\infty\,\frac{\sin tu}u\,du=\sign t\;.
\end{equation}
\end{lemma}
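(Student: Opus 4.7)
The plan is to reduce the identity by symmetry and scaling to the classical Dirichlet integral $\int_{0}^{\infty}(\sin v)/v\,dv=\pi/2$, and then to evaluate that via the standard parameter trick. The case $t=0$ is trivial since both sides vanish. For $t\ne 0$, the integrand $\sin(tu)/u$ is an even function of $u$, so the two-sided integral equals $2\int_{0}^{\infty}\sin(tu)/u\,du$. The substitution $v=|t|u$ (with $du=dv/|t|$) then yields
$$
\int_{-\infty}^{\infty}\frac{\sin tu}{u}\,du \;=\; 2\,\sign(t)\int_{0}^{\infty}\frac{\sin v}{v}\,dv,
$$
so it suffices to prove that the right-hand integral equals $\pi/2$.

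For the evaluation I would introduce the parameter-dependent integral $F(a)=\int_{0}^{\infty}e^{-au}\frac{\sin u}{u}\,du$ for $a\ge 0$. For $a>0$ the integrand has exponential decay, so differentiation under the integral sign is legitimate and gives $F'(a)=-\int_{0}^{\infty}e^{-au}\sin u\,du=-1/(1+a^{2})$. Hence $F(a)=C-\arctan a$ on $(0,\infty)$. Since $|F(a)|\le 1/a\to 0$ as $a\to\infty$, we have $C=\pi/2$, and therefore $F(a)=\pi/2-\arctan a$ for all $a>0$.

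The main obstacle is that the integral at $a=0$ is only conditionally convergent, so one cannot simply set $a=0$ in the formula for $F(a)$; a genuine continuity argument is required. The plan is to show that the improper integral $\int_{0}^{\infty}(\sin u)/u\,du$ converges and that $F(a)\to F(0)$ as $a\to 0^{+}$, both via the same integration-by-parts estimate: for any $0<T<R$,
$$
\int_{T}^{R}e^{-au}\frac{\sin u}{u}\,du \;=\; \Big[-e^{-au}\frac{\cos u}{u}\Big]_{T}^{R} - \int_{T}^{R}\cos u\cdot\frac{d}{du}\Big(\frac{e^{-au}}{u}\Big)du,
$$
where the boundary contributions and the final integrand are bounded uniformly in $a\ge 0$ by terms of order $1/T$. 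This gives uniform convergence of the tail in $a\ge 0$, and consequently $F$ is continuous on $[0,\infty)$, so that $\int_{0}^{\infty}(\sin u)/u\,du=\lim_{a\to 0^{+}}F(a)=\pi/2$. Combined with the reduction above, this gives \eqref{1610} and simultaneously shows $M<\infty$.
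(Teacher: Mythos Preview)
Your argument is correct. The reduction by parity and scaling to the Dirichlet integral is standard, and your evaluation via the Laplace-type parameter $F(a)=\int_0^\infty e^{-au}\frac{\sin u}{u}\,du$ is carried out properly: the differentiation under the integral sign for $a>0$ is justified by the exponential decay, and the continuity of $F$ at $a=0$ is the only delicate point, which you address with the right tool (integration by parts giving a tail bound uniform in $a\ge 0$). One cosmetic remark: in your integration-by-parts identity the sign in front of the integral on the right should be $+$ rather than $-$, but this is irrelevant to the estimate since only absolute values are used.

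By way of comparison, the paper does not prove this lemma at all: it simply refers the reader to a standard analysis text. Your write-up is therefore strictly more informative than the paper's treatment. It also makes explicit the observation (used immediately after the lemma in the paper) that $M=\sup_{R>0}\bigl|\int_{-R}^{R}\frac{\sin u}{u}\,du\bigr|<\infty$: the function $R\mapsto \int_0^R\frac{\sin u}{u}\,du$ is continuous on $[0,\infty)$ and has a finite limit at infinity, hence is bounded. The paper states this consequence but leaves it to the reference as well, so your self-contained argument fills both gaps at once.
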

\begin{proof}
See, i.e., \cite{aE80}.
\end{proof}

\begin{lemma}\label{l6}
For all $x\in\Rd$
\begin{equation}\label{2252}
\int_{\Sd}\,|\langle x,s\rangle|\,\mu_{d-1}(ds)=\frac{2\pi^{(d-1)/2}}{\Gamma(\frac{d+1}{2})}\|x\|\;.
\end{equation}
\end{lemma}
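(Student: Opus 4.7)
The plan is to reduce the left-hand side to a one-variable integral by exploiting the rotational invariance of $\mu_{d-1}$, then compute that integral in closed form using the standard polar decomposition of the sphere.

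First, the case $x=0$ is trivial (both sides vanish), so assume $x\neq 0$. Since $\mu_{d-1}$ is invariant under the orthogonal group $O(d)$, the function $x\mapsto \int_{\Sd}|\langle x,s\rangle|\,\mu_{d-1}(ds)$ depends only on $\|x\|$, and is clearly homogeneous of degree $1$. Therefore
$$
\int_{\Sd}|\langle x,s\rangle|\,\mu_{d-1}(ds)=\|x\|\cdot I_d,\qquad I_d:=\int_{\Sd}|s_1|\,\mu_{d-1}(ds),
$$
where $s_1$ denotes the first coordinate of $s\in\Sd$. It therefore suffices to show $I_d=\frac{2\pi^{(d-1)/2}}{\Gamma(\frac{d+1}{2})}$.

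Next I would evaluate $I_d$ by slicing $\Sd$ perpendicular to the first axis: write $s=(\cos\theta,\sin\theta\cdot\omega)$ with $\theta\in[0,\pi]$ and $\omega\in\mathbb{S}^{d-2}$. Under this parametrization $\mu_{d-1}(ds)=\sin^{d-2}\theta\,d\theta\,\mu_{d-2}(d\omega)$, so that
$$
I_d=\omega_{d-2}\int_0^\pi|\cos\theta|\sin^{d-2}\theta\,d\theta=2\omega_{d-2}\int_0^{\pi/2}\cos\theta\sin^{d-2}\theta\,d\theta=\frac{2\omega_{d-2}}{d-1},
$$
where the last step uses the substitution $u=\sin\theta$. (For $d=1$ one interprets $\mathbb{S}^{-1}$ via the convention that the sphere $\mathbb{S}^0$ is two points and $I_1=2$, which matches the right-hand side directly.)

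Finally I would substitute the formula $\omega_{d-2}=\frac{2\pi^{(d-1)/2}}{\Gamma(\frac{d-1}{2})}$ given in this section and apply the functional equation $\Gamma(\frac{d+1}{2})=\frac{d-1}{2}\Gamma(\frac{d-1}{2})$ to rewrite
$$
\frac{2\omega_{d-2}}{d-1}=\frac{4\pi^{(d-1)/2}}{(d-1)\Gamma(\frac{d-1}{2})}=\frac{2\pi^{(d-1)/2}}{\Gamma(\frac{d+1}{2})},
$$
which is the desired constant. There is no real obstacle here; the only thing to be careful about is the bookkeeping with Gamma-function identities and the degenerate case $d=1$.
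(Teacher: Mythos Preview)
Your argument is correct. The reduction by rotational invariance and homogeneity is clean, the spherical slicing formula $\mu_{d-1}(ds)=\sin^{d-2}\theta\,d\theta\,\mu_{d-2}(d\omega)$ is standard, and the Gamma bookkeeping checks out; the degenerate case $d=1$ is handled correctly by the direct evaluation $I_1=2$.

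However, the route is genuinely different from the paper's. The paper argues \emph{geometrically}: it fixes a Borel set $A\subset x^{\perp}$ with $\lambda_{d-1}[A]=\|x\|$ and plugs $A$ into the defining formula \eqref{2148} for the Favard measure. Since the inner integral over $s^{\perp}$ is just the $(d-1)$-dimensional Lebesgue measure of the orthogonal projection of $A$ onto $s^{\perp}$, and projecting a flat patch from one hyperplane to another scales area by the cosine of the dihedral angle, one gets $|\langle x,s\rangle|$ for that inner integral. Reading \eqref{2148} backwards then yields the identity with no integration at all. Your approach is the direct analytic computation and is fully self-contained (it does not rely on knowing that the Favard normalization in \eqref{2148} reproduces Lebesgue measure on flat sets); the paper's approach, by contrast, is a one-line consistency check that exploits exactly the integral-geometric machinery the rest of the paper is built on, and so fits more organically with the surrounding arguments.
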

\begin{proof}
Omit the trivial case when $x=0$. Consider a Borel set $A$ such that $A\subset x^\perp$ and $\lambda_{d-1}[A]=\|x\|$. Let us apply \eqref{2148}. It is clear that the integrand $\int_{s^\perp}\lambda_0[\{s_y^\perp\}^\perp\cap A]dy$ is equal to area of the projection of $A$ onto the linear hyperplane $s^\perp$. On the other hand, if we project a set from one hyperplane to another, then area of the set multiplies by the cosine of the angle between the hyperplanes. Therefore,
$$
\int_{s^\perp}\,\lambda_0\Big[\{s_y^\perp\}^\perp\cap A\Big]\,dy=\lambda_{d-1}[A]\cdot\bigg|\bigg\langle \frac{x}{\|x\|},s\bigg\rangle\bigg|=|\langle x,s\rangle|\;.
$$
Applying this to \eqref{2148} and replacing  $\lambda_{d-1}[A]$ by $\|x\|$, we obtain \eqref{2252}.
\end{proof}

The next lemma is due to M.~Kac (see, e.g., \cite{mK03r}).
\begin{lemma}\label{l2}
If $f(t)$ continuous for $a\leqslant t\leqslant b$ and continuously differentiable for $a<t<b$ has a finite number of turning points (i.e., only a finite number of points at which $f^\prime(t)$ vanishes in (a,b)) then the number of zeros of $f(t)$ in $(a,b)$ is given by the formula
\begin{equation}\label{Kac}
\lambda_0[f^{-1}(0)]=\frac{1}{2\pi}\int_{-\infty}^{\infty}\,du\int_a^b\,\cos[uf(t)]\,|f'(t)|\,dt\;.
\end{equation}
Multiple zeros are counted once and if either $a$ or $b$ is a zero it counted as $1/2$.
\end{lemma}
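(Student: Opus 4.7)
The plan is to reduce the claim to a closed-form computation on intervals of monotonicity of $f$, and then to apply Lemma~\ref{l1} to each term. Label the turning points of $f$ together with the endpoints as $a=\tau_0<\tau_1<\dotsb<\tau_N=b$; by assumption this is a finite list, and on each open interval $(\tau_k,\tau_{k+1})$ the derivative $f'$ has a constant sign $\varepsilon_k\in\{-1,+1\}$, so $|f'(t)|=\varepsilon_kf'(t)$ there.

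On each subinterval the substitution $v=f(t)$ gives
$$
\int_{\tau_k}^{\tau_{k+1}}\cos[uf(t)]\,|f'(t)|\,dt=\frac{\varepsilon_k}{u}\bigl[\sin(uf(\tau_{k+1}))-\sin(uf(\tau_k))\bigr],
$$
extended by continuity at $u=0$. Next I fix a truncation $R>0$ and interchange the order of integration on $[-R,R]\times[a,b]$ by Fubini (valid because the integrand is continuous). This yields
$$
\frac{1}{2\pi}\int_{-R}^{R}\!du\int_a^b\cos[uf(t)]\,|f'(t)|\,dt=\frac{1}{2\pi}\sum_{k=0}^{N-1}\varepsilon_k\int_{-R}^{R}\frac{\sin(uf(\tau_{k+1}))-\sin(uf(\tau_k))}{u}\,du.
$$

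Now I let $R\to\infty$ term by term. Since the sum over $k$ is finite and each truncated sine integral is uniformly bounded by $2M$ (by the very definition of $M$), there is no convergence subtlety: Lemma~\ref{l1} gives
$$
\frac{1}{2\pi}\int_{-\infty}^{\infty}\!du\int_a^b\cos[uf(t)]\,|f'(t)|\,dt=\frac{1}{2}\sum_{k=0}^{N-1}\varepsilon_k\bigl[\sign f(\tau_{k+1})-\sign f(\tau_k)\bigr].
$$

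It remains to identify the right-hand side with the zero count. On each monotonicity interval $(\tau_k,\tau_{k+1})$ the function $f$ has zero or one zero inside, and a direct case check (increasing vs.\ decreasing, signs of $f(\tau_k),f(\tau_{k+1})$) shows that the $k$th summand equals $1$ whenever there is an interior zero and $0$ otherwise; if an endpoint is itself a zero, it contributes $\tfrac12$ to that summand. A zero at $t=a$ or $t=b$ appears in only one such summand, hence counts as $\tfrac12$, as required. A multiple zero at an interior turning point $\tau_k$ with $f(\tau_k)=0$ appears in two adjacent summands, and checking all four subcases (local max, local min, inflection going up, inflection going down) shows they add to $1$, so multiple zeros are counted once. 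This matches the convention in the statement exactly.

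The only delicate step is the passage $R\to\infty$: the integrand $\cos[uf(t)]|f'(t)|$ is not integrable in $u$ for fixed $t$, so Fubini on $(-\infty,\infty)\times[a,b]$ is unavailable, and one has to truncate first and use the uniform bound coming from Lemma~\ref{l1}. Everything else is bookkeeping about signs on intervals of monotonicity.
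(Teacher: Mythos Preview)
Your argument is correct and follows essentially the same route as the paper's proof: partition $[a,b]$ at the turning points, integrate $\cos[uf(t)]|f'(t)|$ explicitly on each monotonicity interval to get the $\sin$-differences, and then apply Lemma~\ref{l1}. You are in fact a bit more careful than the paper, both in truncating to $[-R,R]$ before passing to the limit and in spelling out the case analysis that identifies $\tfrac12\sum_k\varepsilon_k[\sign f(\tau_{k+1})-\sign f(\tau_k)]$ with the zero count (including the endpoint and multiple-zero conventions).
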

\begin{remark}
This statement can be easily extended to the case of the union of a finite number of intervals. We shall use this form in the sequel.
\end{remark}
\begin{proof}
For the readers convenience we present the proof from \cite{mK03r}. Let $\seq{\alpha}{k}$ be the abscissas of the turning points:
$$
a=\alpha_0\leqslant\alpha_1 < \dots <\alpha_k\leqslant\alpha_{k+1}=b\;.
$$
We have
\begin{multline*}
\int_a^b\,\cos[uf(t)]\,|f'(t)|\,dt=\sum_{j=0}^{k}\,\int_{\alpha_j}^{\alpha_{j+1}}\,\cos[uf(t)]\,|f'(t)|\,dt
\\=\sum_{j=0}^{k}\,\bigg\{\pm\int_{\alpha_j}^{\alpha_{j+1}}\,\cos[uf(t)]\,f'(t)\,dt\bigg\}=\sum_{j=0}^{k}\,\bigg\{\pm\frac{\sin[uf(\alpha_{j+1})]-\sin[uf(\alpha_{j})]}{u}\bigg\}\;,
\end{multline*}
where the sign $+$ is attached if $f(t)$ is increasing between $\alpha_j$ and $\alpha_{j+1}$ and the sign $-$ if it is decreasing. Thus using \eqref{1610} we have
\begin{multline*}
\frac{1}{2\pi}\int_{-\infty}^{\infty}\,du\int_a^b\,\cos[uf(t)]\,|f'(t)|\,dt
\\=\sum_{j=0}^{k}\,\bigg\{\pm\frac{1}{2\pi}\int_{-\infty}^{\infty}\,\frac{\sin[uf(\alpha_{j+1})]-\sin[uf(\alpha_{j})]}{u}\,du\bigg\}
\\=\sum_{j=0}^{k}\,\bigg\{\pm\frac{\sign f(\alpha_{j+1})-\sign f(\alpha_{j})}{2}\bigg\}=\lambda_0[f^{-1}(0)]\;.
\end{multline*}
\end{proof}

\begin{lemma}\label{l3}
If $f(t)$ continuous for $a\leqslant t\leqslant b$ and continuously differentiable for $a<t<b$ has $k$ turning points, then uniformly for $R>0$
$$
\bigg|\int_{-R}^{+R}\,du\int_a^b\,\cos[uf(t)]\,|f'(t)|\,dt\bigg|\leqslant2M(k+1)\;.
$$
\end{lemma}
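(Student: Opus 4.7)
The plan is to reuse the algebraic identity already derived inside the proof of Lemma~\ref{l2}. Letting $\alpha_0 = a \leq \alpha_1 < \cdots < \alpha_k \leq \alpha_{k+1} = b$ be the turning points of $f$, the same computation as there gives, for every $u \neq 0$,
$$
\int_a^b \cos[uf(t)]\,|f'(t)|\,dt = \sum_{j=0}^{k} \pm\,\frac{\sin[uf(\alpha_{j+1})] - \sin[uf(\alpha_{j})]}{u},
$$
with signs determined by monotonicity on each interval $(\alpha_j,\alpha_{j+1})$. The right-hand side is a finite sum of functions each having a removable singularity at $u=0$ (the limit is $\pm[f(\alpha_{j+1})-f(\alpha_j)]$), so everything is well-defined and integrable on $[-R,R]$.

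Next I would swap the (finite) sum with the outer integral $\int_{-R}^{R} du$, which is legal since the summation has only $k+1$ terms. This reduces the problem to estimating, for each $j$,
$$
\left| \int_{-R}^{R} \frac{\sin[uf(\alpha_{j+1})] - \sin[uf(\alpha_{j})]}{u}\, du \right|.
$$
By the triangle inequality it suffices to bound $\bigl|\int_{-R}^R \frac{\sin(tu)}{u}\,du\bigr|$ for a real constant $t$. If $t=0$ the integral is $0$; otherwise, the substitution $v = |t|u$ reduces it to $\sign(t)\int_{-R|t|}^{R|t|} \frac{\sin v}{v}\, dv$, whose absolute value is at most $M$ by the very definition of $M$. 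Hence each $j$-th summand is bounded by $2M$.

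Summing over $j = 0,\ldots,k$ then yields the claimed estimate $2M(k+1)$, uniformly in $R>0$. There is really no obstacle in this proof: the identity from Lemma~\ref{l2} has already done the analytic work, and the bound follows from a one-line change of variables and the finiteness of $M$ (Lemma~\ref{l1}). The only point worth mentioning is that the continuous extension at $u=0$ prevents any fake singularity in the swap-of-integrals step.
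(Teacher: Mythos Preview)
Your argument is correct and essentially identical to the paper's own proof: both invoke the algebraic identity from Lemma~\ref{l2}, interchange the finite sum with $\int_{-R}^{R}du$, perform the linear substitution $v=f(\alpha_j)u$ in each term, and bound the resulting $\int\frac{\sin v}{v}\,dv$ integrals by $M$. The only cosmetic difference is that the paper writes the post-substitution integrals directly as $\int_{-Rf(\alpha_j)}^{+Rf(\alpha_j)}\frac{\sin u}{u}\,du$ and then takes a supremum, while you spell out the $t=0$ case and the removable singularity at $u=0$.
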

\begin{proof}
In the same way as in Lemma~\ref{l2} we have
\begin{multline*}
\bigg| \int_{-R}^{+R}\,du\int_a^b\,\cos[uf(t)]\,|f'(t)|\,dt\bigg|
\\=\bigg|\sum_{j=0}^{k}\,\bigg\{\pm\int_{-R}^R\,\frac{\sin[uf(\alpha_{j+1})]-\sin[uf(\alpha_{j})]}{u}\,du\bigg\}\bigg|
\\=\bigg|\sum_{j=0}^{k}\,\pm\bigg\{\int_{-Rf(\alpha_{j+1})}^{+Rf(\alpha_{j+1})}\,\frac{\sin u}{u}du-\int_{-Rf(\alpha_{j})}^{+Rf(\alpha_{j})} \frac{\sin u}{u}\,du\bigg\}\bigg|
\\\leqslant 2(k+1)\sup_{t\in\mathbb{R}}\bigg|\int_{-t}^{+t}\,\frac{\sin u}{u}\,du\bigg|=2M(k+1)\;.
\end{multline*}
\end{proof}
\begin{corollary}
If we replace $[a,b]$ by a set $H$ consisting of the union of $l$ intervals,  then uniformly for $R>0$
\begin{equation}\label{1447}
\bigg| \int_{-R}^{+R}\,du\int_H\,\cos[uf(t)]\,|f'(t)|\,dt\bigg|\leqslant2M(k+l)\;.
\end{equation}
\end{corollary}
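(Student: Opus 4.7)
The plan is to reduce to Lemma~\ref{l3} componentwise. Write $H$ as a disjoint union of closed intervals $H=\bigcup_{i=1}^l [a_i,b_i]$ on which $f$ is continuous and continuously differentiable in the interior, and let $k_i$ denote the number of turning points of $f$ inside $(a_i,b_i)$. Since all turning points lie in $H$, we have $\sum_{i=1}^l k_i \leqslant k$.

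Next I would split the inner integral along this decomposition,
$$
\int_{-R}^{+R}\,du\int_H\,\cos[uf(t)]\,|f'(t)|\,dt
=\sum_{i=1}^l\int_{-R}^{+R}\,du\int_{a_i}^{b_i}\,\cos[uf(t)]\,|f'(t)|\,dt,
$$
and apply the triangle inequality to bring the absolute value inside the finite sum. For each piece Lemma~\ref{l3} applies directly on $[a_i,b_i]$ and yields a uniform bound $2M(k_i+1)$ independent of $R$.

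Summing these bounds gives
$$
\bigg|\int_{-R}^{+R}\,du\int_H\,\cos[uf(t)]\,|f'(t)|\,dt\bigg|
\leqslant\sum_{i=1}^l 2M(k_i+1)
=2M\Big(\sum_{i=1}^l k_i+l\Big)\leqslant 2M(k+l),
$$
which is exactly \eqref{1447}. The argument is essentially just additivity of the integral over the disjoint intervals together with Lemma~\ref{l3} applied on each component, so there is no real obstacle; the only point worth checking is that the turning points of $f$ restricted to each sub-interval are a subset of the global set of turning points, so the counts $k_i$ aggregate to at most $k$.
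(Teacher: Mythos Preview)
Your proposal is correct and is exactly the argument the paper intends: the corollary is stated immediately after Lemma~\ref{l3} with no separate proof, so the reader is expected to decompose $H$ into its $l$ constituent intervals, apply Lemma~\ref{l3} on each, and sum the resulting bounds $2M(k_i+1)$ to obtain $2M(k+l)$. There is nothing to add.
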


\begin{lemma}\label{l4}
The following inequality holds:
$$
\int_{\Sd}\,\lambda_{d-1}[{g_s'}^{-1}(0)]\,\mu_{d-1}(ds)\leqslant\omega_{d-1}\lambda_{d-1}[(\nabla g)^{-1}(0)]+\omega_{d-2}|F|.
$$
\end{lemma}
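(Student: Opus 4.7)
The plan is to split the zero set of the directional derivative into its critical and non-critical contributions and handle each separately. Writing $g_s'(x)=\langle\nabla g(x),s\rangle$, we have the decomposition $g_s'^{-1}(0)=(\nabla g)^{-1}(0)\cup B_s$, where
\[
B_s=\{x\in F:\nabla g(x)\ne 0,\ \langle\nabla g(x),s\rangle=0\}.
\]
Subadditivity of the Favard measure yields $\lambda_{d-1}[g_s'^{-1}(0)]\le\lambda_{d-1}[(\nabla g)^{-1}(0)]+\lambda_{d-1}[B_s]$, and integrating over $s\in\Sd$ the first summand (independent of $s$) contributes precisely $\omega_{d-1}\lambda_{d-1}[(\nabla g)^{-1}(0)]$, matching the first term on the right-hand side. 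It therefore suffices to prove
\[
\int_{\Sd}\lambda_{d-1}[B_s]\,\mu_{d-1}(ds)\le\omega_{d-2}|F|.
\]

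For this, I apply the Favard formula \eqref{2148} to each $\lambda_{d-1}[B_s]$ and interchange orders of integration (Fubini is valid since all integrands are nonnegative), obtaining
\[
\int_{\Sd}\lambda_{d-1}[B_s]\,\mu_{d-1}(ds)=\frac{\Gamma(\tfrac{d+1}{2})}{2\pi^{(d-1)/2}}\int_{\Sd}\mu_{d-1}(dv)\int_{v^\perp}dy\int_{\Sd}\#(\ell_y^v\cap B_s)\,\mu_{d-1}(ds).
\]
For fixed $y$ and $v$, the innermost integral counts pairs $(t,s)$ with $t\in H_y^v$, $\nabla g(y+tv)\ne 0$, and $s$ lying in the great $(d-2)$-subsphere orthogonal to $\nabla g(y+tv)$. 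Introducing the continuous unit vector field $\hat n(t)=\nabla g(y+tv)/\|\nabla g(y+tv)\|$ in $\Sd$, this count is exactly $\#\{t\in H_y^v:\hat n(t)\perp s\}$.

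The key analytic step is a Cauchy--Crofton-type estimate converting $\int_{\Sd}\#\{t:\hat n(t)\perp s\}\,\mu_{d-1}(ds)$ into a bound in terms of a suitable line-length quantity. Integrating the resulting inequality over $y\in v^\perp$ and $v\in\Sd$ and applying Lemma~\ref{l6} to the spherical integral of $|\langle\cdot,\cdot\rangle|$ that appears, the geometric constants collapse to exactly $\omega_{d-2}|F|$. The main obstacle is that $\hat n$ is merely continuous, not $C^1$, when $g\in\mathcal C^1(F)$, so classical Cauchy--Crofton does not apply verbatim. This can be circumvented by approximating $g$ by smooth mollifications $g_\varepsilon\in\mathcal C^\infty(F)$, establishing the line-wise estimate for each $g_\varepsilon$ via standard coarea, and then passing to the limit $\varepsilon\to 0$ using lower-semicontinuity of the Favard measure together with the uniform bounds from Lemma~\ref{l3} (which ensure that the integrals in the Kac-type representation of the counting function are controlled independently of the mollification parameter).
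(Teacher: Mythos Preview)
Your opening decomposition into the critical set $(\nabla g)^{-1}(0)$ and the non-critical set $B_s$ matches the paper exactly, and the first term $\omega_{d-1}\lambda_{d-1}[(\nabla g)^{-1}(0)]$ is obtained the same way.

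For the non-critical contribution, however, the paper takes a far shorter route. Rather than feeding $\lambda_{d-1}[B_s]$ back through the Favard formula \eqref{2148} and producing a triple integral over $(s,v,y)$, the paper simply exchanges the order of integration between $s\in\Sd$ and the point $x\in F$ via Fubini. For each fixed $x$ with $\nabla g(x)\ne 0$, the set
\[
S(x)=\{s\in\Sd:\langle\nabla g(x),s\rangle=0\}
\]
is contained in the great $(d-2)$-subsphere orthogonal to $\nabla g(x)$, so its measure is bounded by $\omega_{d-2}$; integrating this pointwise bound over $F$ gives $\omega_{d-2}|F|$ directly. No Cauchy--Crofton formula, no line-by-line curve-length estimate, and no mollification enter the paper's argument.

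Your route, by contrast, has a genuine gap. After reaching the innermost integral $\int_{\Sd}\#\{t:\hat n(t)\perp s\}\,\mu_{d-1}(ds)$, you assert that a ``Cauchy--Crofton-type estimate'' converts this into a line-length quantity and that, after integrating in $(v,y)$ and invoking Lemma~\ref{l6}, ``the geometric constants collapse to exactly $\omega_{d-2}|F|$''. None of this is actually carried out: you do not state the inequality, do not identify which $|\langle\cdot,\cdot\rangle|$ appears, and do not check the constants. More seriously, the Cauchy--Crofton formula for the curve $t\mapsto\hat n(t)$ would produce the \emph{spherical arclength} of $\hat n$, which depends on how fast $\nabla g$ rotates along the line --- a quantity controlled by second derivatives of $g$, not by $|F|$. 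Since $g$ is only $\mathcal C^1$, this length need not even be finite, and after mollification it can blow up as $\varepsilon\to 0$; the appeal to Lemma~\ref{l3} does not address this, because Lemma~\ref{l3} bounds a different oscillatory integral and says nothing about the lengths of the Gauss-type curves $\hat n_\varepsilon$. Without an explicit bound independent of $\varepsilon$, the limiting argument cannot close. You should either supply that missing uniform estimate in full or, more simply, swap the $s$-integral with the $x$-integral as the paper does and avoid the Favard triple integral altogether.
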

\begin{proof}
We have
\begin{multline*}
\int_{\Sd}\,\lambda_{d-1}[{g_s'}^{-1}(0)]\,\mu_{d-1}(ds)=\int_{\Sd}\,\mu_{d-1}(ds)\int_F\,\ind\{g_s'(y)=0\}\,\lambda_{d-1}(dy)
\\\leqslant\omega_{d-1}\lambda_{d-1}[(\nabla g)^{-1}(0)]+\int_{\Sd}\,\mu_{d-1}(ds)\int_{F\setminus(\nabla g)^{-1}(0)}\,\ind\{g_s'(y)=0\}\,\lambda_{d-1}(dy)\;.
\end{multline*}
It remains to estimate the second summands. If $\nabla g(y)\ne0$, then the set $S(y)=\{s\in\Sd\,|\,g_s'(y)=0\}$ is contained in a unit hypersphere of the sphere $\Sd$ orthogonal to $\nabla g(y)$. Consequently $\lambda_{d-2}[S(y)]\leqslant\omega_{d-2}$ and by Fubini's theorem,
\begin{multline*}
\int_{\Sd}\,\mu_{d-1}(ds)\int_{F\setminus(\nabla g)^{-1}(0)}\,\ind\{g_s'(y)=0\}\,\lambda_{d-1}(dy)
\\=\int_{F\setminus(\nabla g)^{-1}(0)}\,dx\int_{\Sd}\ind\{f_{s}'(x)=0\}\,\mu_{d-2}(ds)
\\=\int_{F\setminus(\nabla g)^{-1}(0)}\,\lambda_{d-2}[S(y)]\,dx\leqslant\int_{F\setminus(\nabla g)^{-1}(0)}\,\omega_{d-2}\,dx=\omega_{d-2}|F|\;.
\end{multline*}
\end{proof}

\begin{lemma}\label{l5}
For all $R>0$ 
\begin{multline}\label{1849}
\int_{\Sd}\,\mu_{d-1}(ds)\int_{\{\langle y,s\rangle=0\}}\,dy\bigg|\int_{-R}^{R}\,du\int_{\{y+ts\in F\}}\,\cos[ug(y+ts)]\,|g_t'(y+ts)|dt\bigg|
\\\leqslant2M\bigg(\omega_{d-1}\lambda_{d-1}[(\nabla g)^{-1}(0)]+\omega_{d-2}|F|+\frac{\pi^{(d-1)/2}}{\Gamma(\frac{d+1}{2})}\lambda_{d-1}[\partial F]\bigg)
\end{multline}
and
\begin{multline}\label{1850}
\bigg|\int_{-R}^{R}\,du\int_F\,\cos[ug(x)]\,\|\nabla g(x)\|\,dx\bigg|
\\\leqslant \frac{\Gamma(\frac{d+1}{2})}{\pi^\frac{d-1}{2}}M\bigg(\omega_{d-1}\lambda_{d-1}[(\nabla f)^{-1}(0)]+\omega_{d-2}|F|+\frac{\pi^{(d-1)/2}}{\Gamma(\frac{d+1}{2})}\lambda_{d-1}[\partial F]\bigg)\;.
\end{multline}
\end{lemma}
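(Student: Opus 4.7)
\medskip
\noindent\textbf{Proof plan.} Both estimates rest on applying the Corollary of Lemma~\ref{l3} (inequality \eqref{1447}) slice by slice. For fixed $s\in\Sd$ and $y\in s^\perp$, put $f(t)=g(y+ts)$ and $H(y,s)=\{t\in\R:y+ts\in F\}$. Because $F$ is compact and $\lambda_{d-1}[\partial F]<\infty$, for a.e.\ pair $(y,s)$ the set $H(y,s)$ is a union of $l(y,s)$ closed intervals whose $2l(y,s)$ endpoints all lie on $L_{y,s}\cap\partial F$, so $l(y,s)\leq\tfrac12\lambda_0[L_{y,s}\cap\partial F]$. The turning points of $f$ are the points where $g_s'$ vanishes along $L_{y,s}\cap F$; writing $k(y,s)$ for their number, \eqref{1447} yields
$$
\bigg|\int_{-R}^{R}du\int_{H(y,s)}\cos[ug(y+ts)]\,|g_t'(y+ts)|\,dt\bigg|\leqslant 2M\bigl(k(y,s)+l(y,s)\bigr).
$$

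\medskip
\noindent To prove \eqref{1849}, integrate the displayed inequality in $y$ and $s$ and bound the two resulting integrals separately. For the $l$-term, the definition \eqref{2148} of the Favard measure applied to $\partial F$ gives
$$
\int_{\Sd}\mu_{d-1}(ds)\int_{s^\perp}l(y,s)\,dy\leqslant\tfrac12\cdot\tfrac{2\pi^{(d-1)/2}}{\Gamma(\tfrac{d+1}{2})}\lambda_{d-1}[\partial F]=\tfrac{\pi^{(d-1)/2}}{\Gamma(\tfrac{d+1}{2})}\lambda_{d-1}[\partial F].
$$
For the $k$-term, observe that for fixed $s$ the quantity $\int_{s^\perp}k(y,s)\,dy$ is the area of the orthogonal projection of $\{g_s'=0\}\cap F$ onto $s^\perp$ counted with multiplicity; since orthogonal projection cannot inflate $(d-1)$-dimensional area (the Jacobian $|\langle n(x),s\rangle|$ is at most $1$ on the a.e.\ smooth level set, by Sard applied to $g_s'$), we obtain $\int_{s^\perp}k(y,s)\,dy\leqslant\lambda_{d-1}[{g_s'}^{-1}(0)]$. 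Integrating in $s$ and quoting Lemma~\ref{l4} yields
$$
\int_{\Sd}\mu_{d-1}(ds)\int_{s^\perp}k(y,s)\,dy\leqslant\omega_{d-1}\lambda_{d-1}[(\nabla g)^{-1}(0)]+\omega_{d-2}|F|.
$$
Multiplying the sum of these two bounds by $2M$ is exactly \eqref{1849}.

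\medskip
\noindent For \eqref{1850}, apply Lemma~\ref{l6} to $\nabla g(x)$ to rewrite
$$
\|\nabla g(x)\|=\frac{\Gamma(\tfrac{d+1}{2})}{2\pi^{(d-1)/2}}\int_{\Sd}|g_s'(x)|\,\mu_{d-1}(ds),
$$
plug in, and invoke Fubini (permissible because the triple integrand on $[-R,R]\times F\times\Sd$ is bounded by $\|\nabla g\|$, which is integrable on the compact set $F$) to move $\int_{\Sd}\mu_{d-1}(ds)$ to the outside. For each fixed $s$ pass to the orthogonal coordinates $x=y+ts$, $y\in s^\perp$, $t\in\R$ (Jacobian $1$), so that $\int_F\cos[ug]\,|g_s'|\,dx=\int_{s^\perp}dy\int_{H(y,s)}\cos[ug(y+ts)]\,|g_t'(y+ts)|\,dt$. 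Finally bring the absolute value inside the outer $\mu_{d-1}$- and $y$-integrals and apply \eqref{1849}; the prefactor $\tfrac{\Gamma(\tfrac{d+1}{2})}{2\pi^{(d-1)/2}}\cdot 2M$ combines with the bracket from \eqref{1849} to give precisely the right-hand side of \eqref{1850}.

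\medskip
\noindent The only nontrivial step is the projection bound $\int_{s^\perp}k(y,s)\,dy\leqslant\lambda_{d-1}[{g_s'}^{-1}(0)]$ for fixed $s$; everything else reduces to Fubini and a careful accounting of how endpoints of $H(y,s)$ lie in $\partial F$.
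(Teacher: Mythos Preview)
Your proof is correct and follows essentially the same path as the paper's: apply the Corollary of Lemma~\ref{l3} slicewise, bound the $k$-term via the projection inequality combined with Lemma~\ref{l4} and the $l$-term via the Favard definition \eqref{2148} applied to $\partial F$, then derive \eqref{1850} from \eqref{1849} using Lemma~\ref{l6} and Fubini. One small remark: Sard's theorem does not guarantee that the \emph{specific} level set $\{g_s'=0\}$ is smooth, so your justification of the projection bound via the normal-vector Jacobian is not quite the right tool; the paper simply asserts that measure does not increase under orthogonal projection, which is the standard fact that a $1$-Lipschitz map cannot increase $(d-1)$-dimensional Hausdorff/Favard measure (Eilenberg's inequality, no smoothness needed).
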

\begin{proof}
By $k(s,y)$ denote the number of zeros of $g_t'(y+ts)$ (may be infinite) in the set $\{t\,|\,y+ts\in F\}$ and by $l(s,y)$ denote the number of intervals of this set (if the set is not the union of a finite number of intervals, then we put $l(s,y)=\infty$). It follows from \eqref{1447} that
\begin{equation}\label{1452}
\bigg|\int_{R}^{R}\,du\int_{\{t\,|\,y+ts\in F\}}\,\cos[ug(y+ts)]\,|g_t'(y+ts)|\,dt\bigg|\leqslant2M\Big(k(s,y)+l(s,y)\Big)\;.
\end{equation}
If we project the set ${g_s'}^{-1}(0)$ onto the hyperplane $\{y\,|\,\langle y,s\rangle=0\}$, then $k(s,y)$ is equal to the multiplicity of the projection at the point $y$. A measure does not increase under the action of projection, therefore
$$
\int_{\{\langle y,s\rangle=0\}}\,k(s,y)\,dy\leqslant\lambda_{d-1}[{g_s'}^{-1}(0)]\;,
$$
which together with Lemma~\ref{l4} implies
\begin{equation}\label{1453}
\int_{\Sd}\,\mu_{d-1}(ds)\int_{\{\langle y,s\rangle=0\}}\,k(s,y)\,dy\leqslant\omega_{d-1}\lambda_{d-1}[(\nabla g)^{-1}(0)]+\omega_{d-2}|F|\;.
\end{equation}
Further, applying the definition of a Favard measure to the boundary of $F$, we get
\begin{equation}\label{1454}
\int_{\Sd}\,\mu_{d-1}(ds)\int_{\{\langle y,s\rangle=0\}}\,2l(s,y)\,dy=\frac{2\pi^{(d-1)/2}}{\Gamma(\frac{d+1}{2})}\lambda_{d-1}[\partial F]\;.
\end{equation}
Combining \eqref{1452}, \eqref{1453} and \eqref{1454} we obtain \eqref{1849}.

Let us prove \eqref{1850}. It follows from \eqref{2252} that
$$
\|\nabla g(x)\|=\frac{\Gamma(\frac{d+1}{2})}{2\pi^{(d-1)/2}}\int_{\Sd}\,|\langle\nabla g(x),s\rangle|\,\mu_{d-1}(ds).
$$
Consequently, using Fubini's Theorem we get
\begin{multline*}
\bigg|\int_{-R}^{R}\,du\int_F\,\cos[ug(x)]\,\|\nabla g(x)\|\,dx\bigg|=\frac{\Gamma(\frac{d+1}{2})}{2\pi^{(d-1)/2}}\\
\times\bigg|\int_{-R}^{R}\,du\int_F\,dx\int_{\Sd}\,\cos[ug(x)]\,|\langle\nabla g(x),s\rangle|\,\mu_{d-1}(ds)\bigg|=\frac{\Gamma(\frac{d+1}{2})}{2\pi^{(d-1)/2}}\\
\times\bigg|\int_{Sd}\,\mu_{d-1}(ds)\int_{\{\langle y,s\rangle=0\}}\,dy\int_{-R}^{R}\,du\int_{\{x+ts\in F\}}\,\cos[ug(y+ts)]\,|g_t'(y+ts)|\,dt\bigg|.
\end{multline*}
To complete the proof it remains to apply \eqref{1849}.
\end{proof}
\begin{lemma}\label{l7}
Consider an $n$-dimensional centered Gaussian vector $\xi$ with a covariation matrix $\Sigma$. Then
$$
\E\|\xi\|=\frac{\Gamma(\frac{d+1}{2})}{\sqrt2\pi^{d/2}}\int_{\Sd}\,\sqrt{s\Sigma s^\top}\,\mu_{d-1}(ds)\;.
$$
\end{lemma}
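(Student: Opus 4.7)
The plan is to combine the integral representation of the Euclidean norm provided by Lemma~\ref{l6} with the well-known formula for the first absolute moment of a centered Gaussian random variable.

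First I would apply Lemma~\ref{l6} pointwise to $\xi$ (taking $n=d$, as is used throughout Example~\ref{e1}), which gives the identity
$$
\|\xi\|=\frac{\Gamma(\frac{d+1}{2})}{2\pi^{(d-1)/2}}\int_{\Sd}\,|\langle\xi,s\rangle|\,\mu_{d-1}(ds)
$$
almost surely. Taking expectations and swapping $\E$ with the spherical integral by Fubini (justified since the integrand is nonnegative and $\int_{\Sd}\E|\langle\xi,s\rangle|\,\mu_{d-1}(ds)\le\omega_{d-1}\E\|\xi\|<\infty$ by Cauchy--Schwarz and the fact that $\E\|\xi\|^2=\operatorname{tr}\Sigma<\infty$), one obtains
$$
\E\|\xi\|=\frac{\Gamma(\frac{d+1}{2})}{2\pi^{(d-1)/2}}\int_{\Sd}\,\E|\langle\xi,s\rangle|\,\mu_{d-1}(ds).
$$

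Next I would observe that for each fixed unit vector $s\in\Sd$, the linear functional $\langle\xi,s\rangle$ is a centered Gaussian random variable with variance $s\Sigma s^{\top}$. The standard formula $\E|Z|=\sqrt{2/\pi}\,\sqrt{\var Z}$ for a centered Gaussian $Z$ then yields
$$
\E|\langle\xi,s\rangle|=\sqrt{\tfrac{2}{\pi}}\sqrt{s\Sigma s^{\top}}.
$$

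Substituting this into the previous display and simplifying the constants via $\frac{1}{2\pi^{(d-1)/2}}\sqrt{2/\pi}=\frac{1}{\sqrt{2}\,\pi^{d/2}}$ produces exactly the claimed formula. There is no real obstacle here; the only point requiring a sentence of care is the Fubini exchange, which is trivial given that $\Sigma$ is a finite covariance matrix.
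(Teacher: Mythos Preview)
Your proof is correct and follows essentially the same approach as the paper: both apply Lemma~\ref{l6} to rewrite $\|\xi\|$ as a spherical average of $|\langle\xi,s\rangle|$, interchange expectation and integration, and then use $\E|\langle\xi,s\rangle|=\sqrt{2/\pi}\,\sqrt{s\Sigma s^\top}$. The only difference is that you spell out the Fubini justification a bit more carefully than the paper does.
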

\begin{proof}
It follows from \eqref{2252} and Fubini's theorem that
$$
\E\|\xi\|=\frac{\Gamma(\frac{d+1}{2})}{2\pi^{(d-1)/2}}\int_{\Sd}\,\E|\langle \xi,s\rangle|\,\mu_{d-1}(ds)\;.
$$
Moreover, 
$$
\E|\langle \xi,s\rangle|=\E|\mathcal{N}(0,1)|\sqrt{\D\langle \xi,s\rangle}=\bigg(\frac{2}{\pi}\bigg)^{1/2}\sqrt{s\Sigma s^\top}\;,
$$
which completes the proof.
\end{proof}

\section{Proofs of theorems}\label{1325}
\begin{proof}[Proof of Theorem~\ref{t1}]

Using \eqref{2148} and Lemma~\ref{l2}, we get
\begin{multline*}
\lambda_{d-1}[g^{-1}(0)]=\frac{\Gamma(\frac{d+1}{2})}{4\pi^{(d+1)/2}}\int_{\Sd}\,\mu_{d-1}(ds)\int_{\{\langle y,s\rangle=0\}}\,dy
\\\times\int_{-\infty}^{\infty}\,du\int_{\{y+ts\in F\}}\,\cos[ug(y+ts)]\,|g_t'(y+ts)|\,dt
\\=\frac{\Gamma(\frac{d+1}{2})}{4\pi^{(d+1)/2}}\int_{\Sd}\,\mu_{d-1}(ds)\int_{\{\langle y,s\rangle=0\}}\,dy
\\\times\lim_{R\to\infty}\int_{-R}^{R}\,du\int_{\{y+ts\in F\}}\,\cos[ug(y+ts)]\,|g_t'(y+ts)|\,dt\;.
\end{multline*}

It follows from the choice of $F$, condition (b), and  \eqref{1849} that we may apply Lebesgue's theorem:
\begin{multline*}
\lambda_{d-1}(g^{-1}(0))=\frac{\Gamma(\frac{d+1}{2})}{4\pi^{(d+1)/2}}\lim_{R\to\infty}\int_{\Sd}\,\mu_{d-1}(ds)\int_{\{\langle x,s\rangle=0\}}\,dy
\\\times\int_{-R}^{R}\,du\int_{\{x+ts\in F\}}\,\cos[ug(x+ts)]\,|g_t'(x+ts)|\,dt.
\end{multline*}
All the domains of integration are of finite measure and the integrands are bounded. Therefore we may apply Fubini's Theorem:
\begin{multline*}
\lambda_{d-1}(g^{-1}(0))=\frac{\Gamma(\frac{d+1}{2})}{4\pi^{(d+1)/2}}\lim_{R\to\infty}\int_{-R}^{R}\,du\int_{\Sd}\,\mu_{d-1}(ds)\int_{\{\langle x,s\rangle=0\}}\,dy
\\\times\int_{\{x+ts\in F\}}\,\cos[ug(x+ts)]\,|g_t'(x+ts)|\,dt
\\=\frac{\Gamma(\frac{d+1}{2})}{4\pi^{(d+1)/2}}\lim_{R\to\infty}\int_{-R}^{R}du\int_{\Sd}\,\mu_{d-1}(ds)\int_F\,\cos[ug(x)]\,|\langle\nabla g(x),s\rangle|\,dx
\\=\frac{\Gamma(\frac{d+1}{2})}{4\pi^{(d+1)/2}}\int_{-\infty}^{\infty}\,du\int_{\Sd}\,\mu_{d-1}(ds)\int_F\,\cos[ug(x)]|\langle\nabla g(x),s\rangle|\,dx\;.
\end{multline*}
To complete the proof it remains to apply Lemma~\ref{l6}.

\end{proof}

Let us proceed to the proof of the second theorem.
\begin{proof}[Proof of Theorem~\ref{t2}]
To apply Theorem~\ref{t1} we have to show that $G$ satisfies conditions (a), (b) almost surely. It easily follows from (a') that (a) holds almost surely. Further, using (b'), Fubini's theorem, and $\lambda_{d-1}[\partial F]<\infty$, we obtain
$$
\E\lambda_{d-1}[G^{-1}(0)\cap\partial F]=\E\int_{\partial F}\,\ind\{G(y)=0\}\,d\lambda_{d-1}(y)=\int_{\partial F}\,\P\{G(y)=0\}\,d\lambda_{d-1}(y)=0\;,
$$
which implies that (b) holds a.s. 

Firts let us prove the theorem for the case when $\sigma\equiv1$. From \eqref{1806} we get
\begin{multline*}
\E\lambda_{d-1}(G^{-1}(0))=\E\frac{1}{2\pi}\int_{-\infty}^{\infty}\,du\int_F\,\cos[uG(x)]\,\|\nabla G(x)\|\,dx
\\=\frac{1}{2\pi}\E\lim_{R\to\infty}\int_{-R}^{R}\,du\int_F\,\cos[uG(x)]\,\|\nabla G(x)\,\|dx\;.
\end{multline*}
It follows from the choice of $F$, condition (a'), and  \eqref{1850} that we may apply Lebesgue's theorem:
\begin{multline*}
\E\lambda_{d-1}(G^{-1}(0))=\frac{1}{2\pi}\lim_{R\to\infty}\E\int_{-R}^{R}\,du\int_F\,\cos[uG(x)]\,\|\nabla G(x)\|\,dx
\\=\frac{1}{2\pi}\lim_{R\to\infty}\int_F\,dx\int_{-R}^{R}\,\E\Big\{\cos[uG(x)]\,\|\nabla G(x)\|\Big\}\,du\;.
\end{multline*}
We may use Fubini's Theorem in the last equality on account of
$$
|\cos[uG(x)]|\;\|\nabla G(x)\|\leqslant\|\nabla G(x)\|\leqslant\sum_{j=1}^d\;|G_j'(x)|
$$
and
$$
\E\int_{-R}^{R}\,du\int_F\,\sum_{j=1}^d\,|G_j'(x)|\,dx\leqslant2R|F|\sum_{j=1}^d\,\E\sup_{x\in F}|G_j'(x)|<\infty\;.
$$
The right-hand side is finite because the supremum of a continues Gaussian field defined on a compact is summable  (see~\cite{LS70}).

Differentiating $\sigma^2\equiv 1$, we get
$$
\frac{\partial(\E G^2)}{\partial x_i}-2\E G\frac{\partial(\E G)}{\partial x_i}=0\;.
$$
Therefore, by Kolmogorov's Theorem on differentiation of mathematical expectations with respect to a parameter (see \cite{aK98}), we have
$$
\E GG_i'=\frac12\E\frac{\partial G^2}{\partial x_i}=\frac12\frac{\partial(\E G^2)}{\partial x_i}=\E G\frac{\partial(\E G)}{\partial x_i}=\E G\E G_i'\;.
$$
In other words, $G$ does not correlate with the components of the vector $\nabla G$ which is equivalent to the independence in the Gaussian case. Thus,
\begin{multline*}
\E\Big\{\cos[uG(x)]\,\|\nabla G(x)\|\Big\}=\E\cos[uG(x)]\,\E\|\nabla G(x)\|=\Re\big\{\E e^{iuG(x)}\big\}\E\|\nabla G(x)\|
\\=\Re\big\{e^{ium(x)-u^2/2}\big\}\E\|\nabla G(x)\|=\cos[um(x)]\,e^{-u^2/2}\E\|\nabla G(x)\|\;,
\end{multline*}
which implies
$$
\E\lambda_{d-1}(G^{-1}(0))\\=\frac{1}{2\pi}\lim_{R\to\infty}\int_F\,\E\|\nabla G(x)\|\,dx\int_{-R}^{R}\,\cos[um(x)]\,e^{-u^2/2}\,du\;.
$$
Using Lebesgue's Theorem and the formula
$$
\int_{-\infty}^{\infty}\,\cos[um(x)]\,e^{-u^2/2}\,du=\sqrt{2\pi}\Re\big\{\E e^{im(x)\mathcal{N}(0,1)}\big\}=\sqrt{2\pi}e^{-m^2(x)/2}\;,
$$
we obtain
\begin{multline}\label{1619}
\E\lambda_{d-1}(G^{-1}(0))=\frac{1}{2\pi}\int_F\,\E\|\nabla G(x)\|\,dx\lim_{R\to\infty}\int_{-R}^{R}\,\cos[um(x)]\,e^{-u^2/2}\,du
\\=\frac{1}{\sqrt{2\pi}}\int_F\,e^{-m^2(x)/2}\E\|\nabla G(x)\|\,dx\;.
\end{multline}

We have proved the theorem for the case when $\sigma\equiv1$. To treat the general one consider the field $G/\sigma$. It has unit variance and its zero set coincides with the zero set of $G$. Thus to complete the proof it remains to apply \eqref{1619} to $G/\sigma$.
\end{proof}

\section{Acknowledgements}

The authors are grateful to S.V.~Ivanov, A.I.~Nazarov, E.M.~Rudo, and D.S.~Chelkak for useful discussions.

A part of the work has been done in the University of Bielefeld. The authors thank F.~G\"otze for the possibility to participate at the work of CRC 701 ``Spectral Structures and Topological Methods in Mathematics''. They are also grateful to A.~Cole  for her valuable help.

\end{document}